\documentclass[12pt]{article}

\usepackage{enumerate}
\usepackage{amsmath}
\usepackage{amsfonts}
\usepackage{amssymb}
\usepackage{graphicx, color}
\usepackage{hyperref}
\usepackage{epsfig}
\usepackage{epstopdf} 

\usepackage{tikz}
\usetikzlibrary{arrows.meta}
\usetikzlibrary{calc}

\usepackage{tocloft}

\usepackage{float}

\definecolor{darkgreen}{rgb}{0,0.55,0}

\newtheorem{proposition}{Proposition}[section]
\newtheorem{theorem}{Theorem}[section]
\newtheorem{lemma}[theorem]{Lemma}

\newtheorem{remark}[theorem]{Remark}

\newtheorem{definition}{Definition}

\DeclareSymbolFont{AMSb}{U}{msb}{m}{n}
\DeclareMathSymbol{\N}{\mathbin}{AMSb}{"4E}
\DeclareMathSymbol{\Z}{\mathbin}{AMSb}{"5A}
\DeclareMathSymbol{\R}{\mathbin}{AMSb}{"52}
\DeclareMathSymbol{\Q}{\mathbin}{AMSb}{"51}
\DeclareMathSymbol{\I}{\mathbin}{AMSb}{"49}

\numberwithin{equation}{section}

\begin{document}

\title{Symmetry and Rigidity Results for the Mean Field Equation and Hawking Mass on \( \mathbb{S}^2 \)}

\author{{Changfeng Gui\footnote{Department of Mathematics, University of Macau, Taipa, Macao. E-mail:  changfenggui@um.edu.mo.}
\qquad Amir Moradifam\footnote{Department of Mathematics, University of California, Riverside, California, USA. E-mail: moradifam@math.ucr.edu. }}}
\date{\today}

\smallbreak \maketitle

\begin{abstract}

In this paper, we establish symmetry results for solutions of the mean field equation 
\[
\frac{\alpha}{2} \Delta u + e^u - 1 = 0
\]
on \( \mathbb{S}^2 \) for $\frac{1}{3}\leq \alpha \leq 1$, under a geometric condition $(\mathcal{H})$ introduced below. The proofs utilize the Sphere Covering Inequality and incorporate topological arguments on \( \mathbb{S}^2 \). These results are further applied to demonstrate a rigidity property of the Hawking mass for stable constant mean curvature (CMC) spheres, addressing a question posed by Robert Bartnik in 2002. Our results unify and extend previous rigidity results obtained under symmetry or hemispherical balance assumptions, and apply beyond the nearly spherical setting.

\end{abstract}

\vspace{1cm}

\section{Introduction}
Consider the unit sphere $\mathbb{S}^2$. For $u \in H^1(\mathbb{S}^2)$, define the nonlinear functional $J_\alpha(u)$ as follows:

\begin{align}\label{jAlpha}
J_\alpha(u) = \frac{\alpha}{4} \int_{\mathbb{S}^2}|\nabla u|^2 \,d\omega + \int_{\mathbb{S}^2}u \,d\omega - \log \int_{\mathbb{S}^2}e^{u} \,d\omega,
\end{align}
where $d\omega$ is the normalized volume form on $\mathbb{S}^2$ satisfying $\int_{\mathbb{S}^2}d\omega = 1$. The Euler-Lagrange equation of the functional \eqref{jAlpha} on the Sobolev space $H^1(\mathbb{S}^2)$ is given by

\begin{equation*}
\frac{\alpha}{2}\Delta u + \frac{e^u}{\int_{\mathbb{S}^2} e^u \,d\omega} - 1 = 0.
\end{equation*}
Since this equation is invariant under adding a constant, we may normalize the solutions and assume that $\int_{\mathbb{S}^2} e^u \,d\omega = 1$. Consequently, the Euler-Lagrange equation becomes

\begin{align}\label{mainPDE}
\frac{\alpha}{2}\Delta u + e^u - 1 = 0, \quad \text{on} \quad \mathbb{S}^2,
\end{align}
where $\Delta$ is the Laplace-Beltrami operator associated with the metric induced from the Euclidean metric of $\mathbb{R}^3.$

The renowned Moser-Trudinger inequality \cite{Moser-MR0301504} states that $J_\alpha$ is bounded below if and only if $\alpha \geq 1$. Later Onofri \cite{O-MR677001} proved that the optimal lower bound for $J_\alpha$ is zero when $\alpha \geq 1$. Aubin \cite{Aubin2-MR534672} showed that if we consider $J_\alpha$ over the restricted class of functions

\[
\mathcal{M}:=\{u\in H^1(S^2): \int_{S^2}e^u x_i\,d\omega=0, \text{ for } i=1,2,3\},
\]
then $J_\alpha$ is bounded below for $\alpha \geq \frac{1}{2}$, and the minimum is achieved within $\mathcal{M}$. Chang and Yang \cite{CY2-MR925123, CY1-MR908146}, in their study of prescribing Gaussian curvature on $S^2$, proved that the infimum of $J_{\alpha}(u)$ over $u$ in $\mathcal{M}$ is zero for $\alpha$ close to $1$ and conjectured that this result should hold for any $\alpha \geq \frac{1}{2}$.

This conjecture was proven to be true for axially symmetric functions when $\alpha > \frac{16}{25} - \epsilon$ by Feldman, Froese, Ghoussoub, and the first author \cite{FFGG-MR1606461}. Subsequently, it was established for axially symmetric functions in the entire range $\alpha \geq \frac{1}{2}$ by the first author and Wei \cite{GW-MR1760786}, and independently by Lin \cite{Lin1-MR1770683}. However, the case for non-axially symmetric functions remained unresolved.

Ghoussoub and Lin \cite{GL-MR2670931} later proved that Chang and Yang's conjecture holds for $\alpha \geq \frac{2}{3} - \epsilon$ for some $\epsilon > 0$, but the conjecture remained open for $\alpha \geq \frac{1}{2}$. For a detailed history of this problem, refer to Chapter 19 in \cite{GoussoubMoradifam}, a monograph by Ghoussoub and the second author.

In \cite{GM-SCI}, we proved Chang and Yang's conjecture in its full generality by discovering the Sphere Covering Inequality. This inequality states that the total area of two distinct surfaces with Gaussian curvature less than or equal to 1, which are conformal to the same planar domain and have the same conformal factor on the boundary, must be at least $4\pi$, and hence is enough to cover the unit sphere after a rearrangement. In the formulation below, the metric $e^{2v_i}dy$ has Gaussian curvature $K_i=1-e^{-2v_i}f_i$, so the condition $f_i\geq 0$ corresponds precisely to $K_i\leq 1$. 

\begin{theorem}[The Sphere Covering Inequality \cite{GM-SCI}]\label{SCI}
Let $\Omega $ be a simply-connected subset of $\R^2$ and assume $v_i \in C^2(\overline{\Omega})$, $i=1,2$ satisfy
\begin{align}\label{mainpde}
\Delta v_i +e^{2v_i}=f_{i}(y),
\end{align}
where $f_2 \geq f_1 \ge 0$ in $\Omega$.  If $v_2 \ge v_1, v_2 \not \equiv v_1$ in $\omega$ and $v_2=v_1$ on $\partial \omega$ for some piecewise
Lipschitz subdomain $\omega \subset \Omega$, then
\begin{align}
\int_{\omega} (e^{2v_1}+e^{2v_2})dy \geq 4\pi. 
\end{align}
Moreover, the equality only holds when $f_2 \equiv f_1 \equiv 0$ in $\omega$, and $(\omega, e^{2v_i}dy ) $, $i=1,2$ are isometric to two complementary spherical caps on the standard unit sphere. 
\end{theorem}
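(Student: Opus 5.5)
Set $w = v_2 - v_1 \ge 0$ in $\omega$, with $w = 0$ on $\partial\omega$ and $w \not\equiv 0$. The plan has four steps: derive a linearized inequality for $w$, apply Alexandrov–Bol (isoperimetric) inequalities for the metrics $e^{2v_i}dy$ on superlevel sets, turn these into a differential inequality in the level parameter, and integrate.

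\textbf{Step 1: the linearized inequality.} Subtracting the two equations gives
\[
\Delta w + e^{2v_2} - e^{2v_1} = f_2 - f_1 \ge 0 ,
\]
so $\Delta w + e^{2v_2} - e^{2v_1} \ge 0$. The interior sign is where $f_2 \ge f_1$ is used.

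\textbf{Step 2: Alexandrov–Bol on superlevel sets.} For $t > 0$ let $\Omega_t = \{w > t\} \subset \omega$. Since $f_i \ge 0$, the metrics $e^{2v_i}dy$ have curvature $K_i \le 1$ on the simply connected $\Omega$. The Alexandrov–Bol inequality then gives
\[
\ell_i(\partial D)^2 \ge A_i(D)\,\bigl(4\pi - A_i(D)\bigr)
\]
for every subdomain $D$, where $A_i(D) = \int_D e^{2v_i}\,dy$ and $\ell_i(\partial D) = \int_{\partial D} e^{v_i}\,ds$. This is the key input and uses $f_i \ge 0$.

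\textbf{Step 3: the differential inequality in $t$.} I will compare the two metrics along the level sets. On $\{w = t\}$ we have $e^{v_2} = e^{t} e^{v_1}$. Write $a_i(t) = A_i(\Omega_t)$ and
\[
m(t) = \int_{\Omega_t} \bigl(e^{2v_2} - e^{2v_1}\bigr)\,dy .
\]
Integrating Step 1 over $\Omega_t$ and using the divergence theorem, the boundary term is $\int_{\partial\Omega_t} |\nabla w|\,ds$ with the outward normal derivative $-|\nabla w|$, so
\[
\int_{\partial\Omega_t} |\nabla w|\,ds \le m(t).
\]
By the coarea formula, $a_i'(t) = -\int_{\partial\Omega_t} e^{2v_i}/|\nabla w|\,ds$. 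Cauchy–Schwarz then gives
\[
\ell_i(\partial\Omega_t)^2 \le \int_{\partial\Omega_t} |\nabla w|\,ds \cdot \int_{\partial\Omega_t} \frac{e^{2v_i}}{|\nabla w|}\,ds \le m(t)\,\bigl(-a_i'(t)\bigr).
\]
Combined with Step 2 this yields
\[
a_i(4\pi - a_i) \le m\,(-a_i'), \qquad i = 1,2 .
\]

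\textbf{Step 4: integrating in $t$.} Argue by contradiction. Suppose $a_1(0) + a_2(0) < 4\pi$, so that each $a_i < 4\pi$ and $4\pi - a_i > a_j$ for $j \ne i$. The aim is to show $\frac{d}{dt}\bigl(e^{-2t}a_2\bigr)$ and $a_1$ together force the quantity $m = a_2 - a_1$ (up to the correct weighting) to vanish in a way that contradicts $w \not\equiv 0$. The cleanest route is the Gui–Moradifam trick of considering the weighted combination
\[
\Phi(t) = e^{-t}a_2(t) - e^{t}a_1(t),
\]
whose boundary contributions match because $e^{v_2} = e^{t}e^{v_1}$ on the level sets. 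The differential inequalities should give $\Phi' \ge 0$ while $\Phi(t) \to 0$ as $t \to \max w$ and $\Phi(0) = a_2(0) - a_1(0) > 0$, producing the contradiction. Concretely, I would compute $m'(t) = a_2'(t) - a_1'(t)$ and show that $\frac{d}{dt}\bigl[(a_1 + a_2)\bigr]$ is controlled so that $4\pi - a_1 - a_2 \le 0$ at $t = 0$.

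\textbf{Main obstacle.} Step 4 is the step I expect to be hardest: choosing the right monotone quantity that combines the two Bol inequalities with the relation $\ell_2 = e^{t}\ell_1$ on $\partial\Omega_t$. It also requires handling level sets that are not connected or not simply connected, which needs Sard's theorem plus care for regular values.

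\textbf{Equality case.} Tracing equality through Bol's inequality forces $K_i \equiv 1$, i.e. $f_i \equiv 0$, and forces the level sets to be geodesic discs. This gives the two complementary spherical caps.
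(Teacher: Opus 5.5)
Your Steps 1--3 are sound up to the usual technicalities. Step 4, which carries the whole content of the inequality, does not work. Because $e^{2v_2}=e^{2w}e^{2v_1}$ pointwise, the layer-cake formula gives $da_2=e^{2t}\,da_1$ as measures in $t$. For your $\Phi(t)=e^{-t}a_2(t)-e^{t}a_1(t)$ the $da_1$-terms therefore cancel identically, and
\[
\Phi'(t)=-\bigl(e^{-t}a_2(t)+e^{t}a_1(t)\bigr)<0\qquad (0<t<\max w).
\]
So $\Phi$ decreases from $\Phi(0)=a_2(0)-a_1(0)>0$ to $0$ for every pair $v_1,v_2$ whatsoever. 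The claimed sign $\Phi'\ge 0$ is false, and no contradiction arises. Moreover, $\Phi$ never uses the Bol inequality, which is the only place the constant $4\pi$ can enter. The closing sentence of Step 4 (control $a_1+a_2$ so that $4\pi-a_1-a_2\le 0$) only restates the goal. As it stands, the proposal does not prove the theorem.

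The missing idea is a comparison with the model case of two complementary caps, where Bol's inequality is an equality. The present paper only quotes Theorem \ref{SCI}. In \cite{GM-SCI} the comparison is done by an equimeasurable rearrangement of $w$, with respect to $e^{2v_1}dy$, onto a disc carrying the round metric $e^{2U}dy$, $U=\ln\frac{2}{1+|y|^2}$. Bol's inequality for $v_1$ on the level sets (your Steps 2--3 with $i=1$) shows the rearranged function is a radial subsolution, and the radial problem is then settled explicitly.

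Your $i=1$ inequality can also be closed directly in the level variable. Set $g(s)=s(4\pi-s)$ and $\Psi(t)=e^{2t}g(a_1(t))-g(a_2(t))$. The relations $a_2'=e^{2t}a_1'$ and $g'(a_1)-g'(a_2)=2m$ give $\Psi'=2e^{2t}\bigl(g(a_1)-m\,(-a_1')\bigr)\le 0$. Also $\lim_{t\uparrow\max w}\Psi(t)\ge 0$. Hence $\Psi(0)=(a_2(0)-a_1(0))(a_1(0)+a_2(0)-4\pi)\ge 0$, which together with $a_2(0)>a_1(0)$ gives the bound. The $i=2$ Bol inequality is not needed.

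To make this rigorous you still need three things:
\begin{enumerate}[(i)]
\item Treat $a_1$ as a monotone function. Jumps at levels where $\{w=t\}$ has positive measure, and any singular part of $da_1$, must be checked to contribute nonpositively to $d\Psi$; they do.
\item Justify Bol on level sets that may be disconnected or multiply connected. Your Step 2 claims it for every subdomain, which is more than the classical statement gives.
\item Redo the equality case. This requires equality in Bol and in Step 1 for a.e. level, and your proposal only sketches it.
\end{enumerate}
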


The dual of the Sphere Covering Inequality was also discovered by the authors and Hang in \cite{GHM-Dual}. The Sphere Covering Inequality was the missing piece in many uniqueness and symmetry problems related to mean field equations, and it has been applied to solve various open questions in related problems \cite{MR3985126, GHMX-Hawking, MR3804203, GM-Torus, MR3750235, Tian}.

In 2002, at the International Congress of Mathematicians (ICM) held in Beijing, Robert Bartnik introduced the rigidity problem for Hawking mass; see page 235 in \cite{MR1957036} and Theorem \ref{SunLemma} below. For background on the concept of Hawking mass in general relativity, refer to \cite{MR3539922, MR3960907, MR2167257, MR2320089}. Sun \cite{MR3733981} established a connection between rigidity of Hawking mass and uniqueness of solutions of the mean field equation \eqref{mainPDE} for $\alpha=\frac{1}{3}$. More precisely, after normalizing $|\Sigma|=4\pi$, the equality case in the stability argument yields a conformal map from $\Sigma$ to the standard sphere and the induced metric can be written as
\[
g_\Sigma=e^u g_0,
\]
where $g_0$ is the standard round metric on $\mathbb{S}^2$. The conformal factor $u$ satisfies
\[
\Delta_{g_0}u=6-6e^u,
\]
together with the volume-preserving conditions
\[
\int_{\mathbb{S}^2}x_i e^u\,d\omega=0,\qquad i=1,2,3.
\]
Thus the equation above is exactly \eqref{mainPDE} with $\alpha=\frac{1}{3}$. We emphasize that this equation does not arise by assuming that $\Sigma$ has constant Gaussian curvature; rather, it follows from the equality case of the stability and spectral estimates in \cite{MR3733981}. Indeed, under the conformal change one has $K_\Sigma=e^{-u}\left(1-\frac12\Delta_{g_0}u\right)$. Sun's reduction can be summarized in the following theorem.

\begin{theorem}[\cite{MR3733981}]\label{SunLemma}
Let $(M,g)$ be a complete Riemannian three-manifold with scalar curvature $R(g) \geq 0$ and let $\Omega \subset M$ be a domain with boundary $\Sigma = \partial\Omega$. We further assume that $\Sigma$ is a stable CMC sphere with $m_H(\Sigma) = 0$, where $m_H(\Sigma)$ denotes the Hawking mass of $\Sigma$. Then $\Omega$ is isometric to a Euclidean ball in $\mathbb{R}^3$ whenever the mean field equation
\begin{align}\label{mainPDE-1/3}
\frac{1}{6}\Delta u + e^u-1=0 \quad \text{on } \ \ \mathbb{S}^2
\end{align}
has only the zero solution.
\end{theorem}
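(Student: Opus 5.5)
The plan is to follow Sun's argument: run the Christodoulou--Yau stability estimate, use $m_H(\Sigma)=0$ to force equality in every inequality, and show the equality case produces a solution of \eqref{mainPDE-1/3}. Uniqueness of that solution then makes $\Sigma$ round, and rigidity of the interior follows.

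\textbf{Stability estimate.} Normalize $|\Sigma|=4\pi$. By uniformization and the Hersch balancing trick there is a conformal diffeomorphism $\phi:\Sigma\to\mathbb{S}^2$ whose coordinate functions $x_i\circ\phi$ have zero mean on $\Sigma$. These are admissible test functions in the stability inequality for volume-preserving variations:
\[
\int_\Sigma \big(\mathrm{Ric}(\nu,\nu)+|A|^2\big)\varphi^2\le \int_\Sigma |\nabla\varphi|^2 .
\]
Summing over $i$ gives $\sum_i\int|\nabla x_i|^2=8\pi$, by conformal invariance of the Dirichlet energy. The Gauss equation gives
\[
\mathrm{Ric}(\nu,\nu)+|A|^2=\tfrac12 R-K+\tfrac34H^2+\tfrac12|\mathring A|^2 .
\]
Gauss--Bonnet and $R\ge0$ then yield
\[
\tfrac34H^2|\Sigma|\le 12\pi ,\qquad \text{i.e.}\qquad 16\pi-H^2|\Sigma|\ge 0 .
\]
This is exactly $m_H(\Sigma)\ge0$.

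\textbf{Equality case.} If $m_H=0$, every inequality above is an equality. Hence $R=0$ and $\mathring A=0$ on $\Sigma$, and each $x_i\circ\phi$ is a Jacobi field:
\[
\Delta_\Sigma x_i+\big(\mathrm{Ric}(\nu,\nu)+|A|^2\big)x_i=\text{const}.
\]
Write $g_\Sigma=e^u g_0$. Then $\Delta_\Sigma=e^{-u}\Delta_{g_0}$ and $\Delta_{g_0}x_i=-2x_i$, so
\[
\big(-2e^{-u}+q\big)x_i=c_i ,\qquad q:=\mathrm{Ric}(\nu,\nu)+|A|^2 .
\]
Since $\sum x_i^2=1$, multiplying by $x_i$ and summing shows $q-2e^{-u}$ is determined, and it must vanish. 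Therefore
\[
q=2e^{-u},\qquad q=-K+\tfrac34H^2 ,\qquad \text{with } H^2=4 \text{ after normalization}.
\]
This gives $K=3-2e^{-u}$. Combining with $K=e^{-u}\big(1-\tfrac12\Delta_{g_0}u\big)$, we get
\[
\Delta_{g_0}u=6-6e^u ,
\]
which is \eqref{mainPDE-1/3}, together with the balancing condition $\int x_ie^u\,d\omega=0$.

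\textbf{Conclusion.} If \eqref{mainPDE-1/3} has only the zero solution, then $u\equiv0$ and $\Sigma$ is a round unit sphere. Equality also forces $\mathrm{Ric}(\nu,\nu)=0$ along $\Sigma$, and $\Sigma$ is umbilic with $H=2$.

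The main obstacle is the final step: passing from the boundary data to $\Omega$ being a Euclidean ball. Here I would invoke a Shi--Tam type argument. Glue $\Omega$ to the exterior of the unit ball in $\mathbb{R}^3$ along $\Sigma$. The result has nonnegative scalar curvature, with mean curvatures matching across $\Sigma$. By the positive mass theorem with corners, its ADM mass is $\ge0$. Moreover, the Brown--York mass $\int_\Sigma(H_0-H)=0$ vanishes, so the rigidity case applies and $\Omega$ is flat, hence a Euclidean ball.
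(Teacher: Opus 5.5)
Your proposal is correct and follows the argument the paper summarizes from Sun. You use Hersch-balanced conformal coordinates as test functions in the Christodoulou--Yau stability estimate, and $m_H(\Sigma)=0$ then forces $R=0$, $\mathring A=0$ and $q=2e^{-u}$ on $\Sigma$, hence $K=3-2e^{-u}$, i.e.\ $\Delta_{g_0}u=6-6e^u$ with $\int x_ie^u\,d\omega=0$.

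The paper does not spell out the last step; your closing via Shi--Tam rigidity (round boundary with $H=H_0=2$) is the standard one. Note, though, that this step, and the statement itself, tacitly require $\Omega$ to be compact with $H>0$ for its outward normal, since $m_H$ only sees $H^2$.
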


Using the above lemma, Sun \cite{MR3733981} showed that the only sufficiently small solution for equation \eqref{mainPDE-1/3} is zero, and hence established the rigidity of the Hawking mass for surfaces that are nearly spherical.

In \cite{Tian}, the authors proved rigidity of Hawking mass under the even symmetry assumption, i.e. there exists an isometry $\rho \colon \Sigma \to \Sigma$ satisfying: $\rho^2 = \text{id}$ and $\rho(x) \neq x$ for $x \in \Sigma$, or equivalently under the assumption that $u(x)=u(-x)$ for all $x\in \mathbb{S}^2$, where $u$ is a solution of \eqref{mainPDE-1/3}. Indeed, they proved the following theorem. 

\begin{theorem}[\cite{Tian}]\label{Theorem1}
 Let \( \frac{1}{3} \leq \alpha < 1 \) and assume \( u \) is a solution of \eqref{mainPDE}, with \( u(x) = u(-x) \). Then \( u \) must be axially symmetric. Moreover, if \( \alpha = \frac{1}{3} \), then \( u = 0 \).
\end{theorem}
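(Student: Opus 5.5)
We argue by contradiction using the Sphere Covering Inequality (Theorem \ref{SCI}) applied to $u$ and one of its rotations. Suppose $u$ is an even solution of \eqref{mainPDE} that is not axially symmetric. For every unit vector $e$, let $R_e$ be the reflection across the plane $e^\perp$ and set $w_e = u - u\circ R_e$. Since $\Delta$ commutes with isometries, $u\circ R_e$ also solves \eqref{mainPDE}. Hence $w_e$ solves the linear equation $\frac{\alpha}{2}\Delta w_e + c_e(x) w_e = 0$, where $c_e = \frac{e^u - e^{u\circ R_e}}{u-u\circ R_e}$. Moreover $w_e$ vanishes on the great circle $e^\perp$.

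The first step is to produce a direction $e$, and a second reflection, so that $u$ and $u\circ R_e$ differ in a controlled way on a topologically simple region. Here we use the evenness $u(x)=u(-x)$. Reflection $R_e$ composed with the antipodal map is the rotation by $\pi$ about the axis $e$. So $u\circ R_e = u\circ \rho_e$, where $\rho_e$ is that rotation, and $w_e$ is odd under $R_e$ and also under the antipodal map. A standard rotating-plane/continuity argument on $e\in\mathbb{S}^2$ (a degree or Borsuk--Ulam type argument, since $e\mapsto w_e$ is odd in a suitable sense) will let us choose $e$ so that $w_e$ vanishes to higher order at some point of $e^\perp$. With the evenness, the nodal set of $w_e$ then contains at least two great-circle-like curves through a common point. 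This forces $\{w_e>0\}$ to split into at least four components, which pair up under the symmetries.

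The second step is to apply Theorem \ref{SCI}. Stereographically project, and write $u = 2v - \log$ of the conformal factor, adjusted for $\alpha$. Then $u$ and $u\circ R_e$ correspond to $v_1,v_2$ satisfying $\Delta v_i + e^{2v_i} = f_i$ with $f_i\ge 0$ exactly when $\alpha\le 1$ (the curvature $1-\alpha$ term gives $f_i\ge0$). On a nodal domain $\omega$ of $w_e$, Theorem \ref{SCI} gives
\[
\int_\omega \left(e^{u}+e^{u\circ R_e}\right)\,d\omega \ \geq\ \frac{1}{\alpha}
\]
in normalized measure, with the $4\pi$ rescaled by the $\alpha$-dependent metric $e^{u/\alpha}$-type factor. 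If $R_e$ maps $\omega$ to a nodal domain of the opposite sign, this becomes $\int_{\omega\cup R_e\omega} e^u\,d\omega \ge 1/\alpha$ for each such pair. With at least two pairs we get $1=\int e^u \ge 2/\alpha > 1$ for $\alpha<2$, a contradiction. So $w_e\equiv 0$ for every $e$ in a set of directions large enough, namely all $e$ orthogonal to a fixed axis. This yields axial symmetry.

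The main obstacle is the topological step: guaranteeing that, for a well-chosen $e$, the nodal set of $w_e$ splits the sphere into enough components, and that these components are simply connected so that Theorem \ref{SCI} applies. I would first try a continuity argument in $e$: the number of nodal domains changes only when a degenerate zero appears, and parity from the two reflection symmetries forces a count of at least four. Once axial symmetry holds, the case $\alpha=\frac13$ reduces to the ODE setting. The known results of \cite{GW-MR1760786, Lin1-MR1770683} for axially symmetric solutions, with the balance condition automatically satisfied by evenness, then give $u=0$.
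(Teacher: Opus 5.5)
\textbf{There is a genuine gap: the mass bound from the Sphere Covering Inequality is inverted.} Under stereographic projection $e^{w}\,dy=\frac{8\pi}{\alpha}e^{u}\,d\omega$. So Theorem~\ref{SCI} gives $\int_{D\cup R_eD}e^{u}\,d\omega\ge\alpha$ for each nodal domain $D\subset H_e$, not $\ge 1/\alpha$. With $k$ disjoint pairs you only get $k\alpha\le 1$. Two pairs therefore exclude $\alpha>\frac12$ and nothing more. The range $[\frac13,1)$ needs \emph{three} pairs, with strictness coming from $f>0$ at $\alpha=\frac13$.

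You can also see a priori that two pairs cannot suffice. For even $u$, $\nabla u\cdot e$ is odd along $C_e$, so it vanishes at some $\pm p$ for \emph{every} $e$. There $w_e$ vanishes to second order, so it changes sign in $H_e$ and already has two pairs of nodal domains. Your argument would then make every even solution symmetric about every plane, hence constant, for all $\alpha\in[\frac13,1)$. That is false: nonconstant even axially symmetric solutions bifurcate transcritically from $u=0$ at $\alpha=\frac13$ (the kernel is spanned by $3x_3^2-1$, whose cube has nonzero mean), so they exist for $\alpha$ slightly above $\frac13$.

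\textbf{The missing idea is how to find a direction $e$ with a third pair,} i.e.\ with more than the two zeros $\pm p$ of $\nabla u\cdot e$ on $C_e$, so that Lemma~\ref{threePointLemma} applies. The paper obtains this statement as a special case of Theorem~\ref{mainTheorem}, since evenness implies the hemisphere balance and hence $(\mathcal{H})$. It finds such an $e$ by contradiction. Suppose every $C_e$ carried exactly two zeros, labeled $p_1(e)$ and $p_2(e)=-p_1(e)$ by the direction of sign change. Then $e\mapsto -p_1(e)$ would be a continuous nonvanishing tangent field (the field $\mathbb{F}$ of Section 2), contradicting the Hairy Ball Theorem. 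Your Borsuk--Ulam sketch never identifies this target. Its parity claims are also wrong: $w_e(-x)=w_e(x)$ and $w_{-e}=w_e$, so nothing is odd in the way you need.

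\textbf{The steps after the first symmetry plane are also unsupported.} One symmetry plane $P_e$ does not give symmetry for all $e$ orthogonal to a fixed axis. In the even case you can continue as follows. Invariance under $R_e$ and under the antipodal map gives invariance under the rotation by $\pi$ about $e$, hence $\nabla u(\pm e)=0$. Lemma~\ref{secondPlaneLemma2} then gives two further orthogonal symmetry planes through $\pm e$. Passing from three mutually orthogonal symmetry planes to axial symmetry is still a separate nontrivial step (Lemma 8 of \cite{Tian}, used in Section 4).

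Likewise, \cite{GW-MR1760786, Lin1-MR1770683} treat axially symmetric solutions only for $\alpha\ge\frac12$ and do not cover $\alpha=\frac13$. By the bifurcation above, $u\equiv0$ is special to the single value $\alpha=\frac13$, and it needs its own argument (Proposition 1 of \cite{Tian}).

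Finally, your worry about simple connectivity of the nodal domains is unnecessary. Theorem~\ref{SCI} only requires the ambient domain, here $\mathbb{R}^2$, to be simply connected.
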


In \cite{GHMX-Hawking}, the authors proved the rigidity of Hawking mass under a weaker assumption. Define 
\[
H_y = \{ x \in \mathbb{S}^2 : x \cdot y \geq 0 \}.
\]

\begin{theorem}[\cite{GHMX-Hawking}]\label{Theorem2}
Let $\frac{1}{3}\leq \alpha \leq 1$, and \( u \) be a solution to \eqref{mainPDE}. If
\begin{eqnarray}\label{IntegralSymmetry}
\int_{H_y}e^{u}dS=\int_{H_{-y}}e^{u}dS \ \ \text{for all} \ \ y\in \mathbb{S}^2,
\end{eqnarray}
then \( u \) is axially symmetric with respect to some point. Moreover, if \( \alpha = \frac{1}{3} \), then \( u \equiv 0 \). 
\end{theorem}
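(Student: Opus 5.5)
We argue by a reflection/symmetry argument combined with the Sphere Covering Inequality (Theorem \ref{SCI}). Fix a direction $y\in\mathbb{S}^2$. Let $R_y$ denote reflection across the plane $y^\perp$, and set $w=u\circ R_y$. Then $w$ also solves \eqref{mainPDE}, and $u=w$ on the great circle $\partial H_y$. The goal is to show that, for suitable $y$, either $u\equiv w$ or we reach a contradiction.

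\textbf{Step 1: reduction to the plane.} Using stereographic projection, write $u$ and $w$ as functions $v_1,v_2$ on $\mathbb{R}^2$ satisfying an equation of the form \eqref{mainpde}. After absorbing the conformal factor of the round metric and the parameter $\alpha$, the resulting $f_i$ is a nonnegative multiple of the conformal factor, namely $(\frac{2}{\alpha}-2)$ times the round density, up to scaling. The scaling $\Delta v + e^{2v}$ is arranged so that $e^{2v_i}\,dy$ corresponds to $\frac{2}{\alpha}e^{u}$ times the normalized round area measure, multiplied by $4\pi$. Thus $f_1=f_2\ge 0$ exactly when $\alpha\le1$.

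\textbf{Step 2: nodal domains of $u-w$ in a hemisphere.} Suppose $u\not\equiv w$. Let $\omega\subset H_y$ be a nodal domain of $u-w$, with $u>w$ on $\omega$ say. Then $w=u$ on $\partial\omega$. Theorem \ref{SCI} gives
\[
\frac{2}{\alpha}\int_{\omega}(e^u+e^w)\,d\omega \;\ge\; 1
\]
in normalized measure, so $\int_\omega (e^u+e^w)\ge \alpha/2$. Equality is excluded for $\alpha<1$ since $f\not\equiv0$. Because $\int_{R_y\omega}e^u=\int_\omega e^w$, each nodal domain in $H_y$ together with its reflection carries $e^u$-mass at least $\alpha/2$. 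The positive and negative nodal domains are handled symmetrically.

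\textbf{Step 3: counting nodal domains to get a contradiction.} If $u-w$ has both a positive and a negative nodal domain in $H_y$, then summing over them gives total mass $\int_{\mathbb{S}^2}e^u\ge\alpha$, with strictness. That alone is insufficient for $\alpha\le 1$. I would therefore choose $y$ cleverly, using topological arguments to make $u-w$ vanish to higher order or to force extra nodal domains. For example, pick $y$ such that $\nabla u$ has vanishing component along $y$ at several points on the circle. An alternative choice is $y$ maximizing or balancing $\int_{H_y}e^u$, as in Theorem \ref{Theorem2}. A Borsuk--Ulam-type argument then produces a direction where the nodal set of $u-w$ in $H_y$ has at least three components, or at least $3$ sign changes. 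With $\alpha\ge \frac13$, three disjoint nodal regions carry mass $>3\cdot\frac{\alpha}{2}\cdot$ (appropriate constant) $\ge 1$. This contradicts the normalization $\int e^u=1$ under the geometric condition $(\mathcal{H})$; I expect $(\mathcal{H})$ to guarantee exactly this extra nodal structure. The conclusion is $u=u\circ R_y$ for every admissible $y$ in a one-parameter family of planes through a common axis, which gives axial symmetry.

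\textbf{Step 4: the case $\alpha=\frac13$.} For $\alpha=\frac13$, axially symmetric solutions are zero by the known ODE classification (Gui--Wei, Lin), combined with the argument in Theorem \ref{Theorem1}.

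\textbf{Main obstacle.} The hardest step is Step 3: producing, from $(\mathcal{H})$ and topology on $\mathbb{S}^2$, enough nodal domains so that the mass bounds from Theorem \ref{SCI} exceed the total mass. I would first try a degree argument on the map $y\mapsto \int_{H_y}e^u - \int_{H_{-y}}e^u$.
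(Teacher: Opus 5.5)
There is a genuine gap. Step 3, which you correctly identify as the heart of the matter, is never carried out, and several ingredients around it are off.

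\textbf{The Step 2 constant.} It is wrong by a factor of $2$. In the normalization $\Delta v+e^{2v}=f$ of Theorem \ref{SCI}, the total mass is $\int_{\mathbb{R}^2}e^{2v}\,dy=4\pi/\alpha$, not $8\pi/\alpha$. Equivalently, with the paper's $w$, each nodal domain carries at least $8\pi$ against a total of $8\pi/\alpha$. So each nodal domain $\omega\subset H_y$ of $u-u\circ R_y$ satisfies $\int_\omega\big(e^u+e^{u\circ R_y}\big)\,d\omega\ge\alpha$. This is exactly what makes $\frac13$ the threshold. Three nodal domains in $H_y$ give $3\alpha\le 1$, which is impossible for $\alpha>\frac13$, and the case $\alpha=\frac13$ is excluded by the equality case since $f>0$. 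With your bound, three domains only give $\frac{3\alpha}{2}\le 1$, which is no contradiction for $\alpha<\frac23$; the ``appropriate constant'' hides this. With the correct constant, two domains already contradict $\alpha>\frac12$, so for $\alpha\le\frac12$ a third domain is genuinely needed.

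\textbf{The hypothesis is never used.} You extract nothing from \eqref{IntegralSymmetry}. You appeal instead to $(\mathcal{H})$, which is not assumed. The degree argument you propose for $y\mapsto\int_{H_y}e^u-\int_{H_{-y}}e^u$ is vacuous, because this map vanishes identically. The paper's proof of this statement is precisely the conversion of \eqref{IntegralSymmetry} into $(\mathcal{H})$. Since $u$ and $u\circ R_y$ both solve \eqref{mainPDE}, the divergence theorem gives
\[
0=\int_{H_y}\big(e^u-e^{u\circ R_y}\big)\,dS=\alpha\int_{C_y}\nabla u\cdot y\,ds .
\]
Hence $\nabla u\cdot y$ changes sign on every $C_y$, and Theorem \ref{mainTheorem} applies.

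\textbf{Three further missing ideas.} Beyond that link, the proof needs the following.

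(i) \emph{Why zeros of $\nabla u\cdot y$ on $C_y$ produce nodal domains.} Your ``vanishing component along $y$'' is the right object. At such a point both $\varphi=u-u\circ R_y$ and $\nabla\varphi$ vanish. By the Hopf lemma and the local structure of nodal sets, an extra nodal arc then enters $H_y$. Three such points on one circle yield three nodal domains in $H_y$, hence $u=u\circ R_y$ (Lemma \ref{threePointLemma}).

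(ii) \emph{A concrete topological argument.} Suppose $u$ had no symmetry plane. By (i) and the sign change above, each $C_x$ carries exactly two zeros of $\nabla u\cdot x$. The chord from the zero where the sign goes $+\to-$ to the zero where it goes $-\to+$ then defines a continuous nonvanishing tangent field on $\mathbb{S}^2$, contradicting the Hairy Ball Theorem. An unspecified ``Borsuk--Ulam-type argument'' does not replace this construction and its continuity proof.

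(iii) \emph{From one symmetry plane to axial symmetry.} Nothing in your argument produces a one-parameter family of symmetry planes. The paper proceeds in three steps:
\begin{enumerate}[(a)]
\item It obtains a second, orthogonal symmetry plane by a separate nodal analysis at a minimum point of $u$ on the equator of the first plane, using Lemma \ref{secondPlaneLemma2} to handle antipodal critical points.
\item It obtains a third plane from Lemma \ref{threePointLemma}.
\item It invokes Lemma 8 of \cite{Tian} for solutions symmetric in three orthogonal planes.
\end{enumerate}

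\textbf{Step 4.} The results of Gui--Wei and Lin concern $\alpha\ge\frac12$. The vanishing of axially symmetric solutions at $\alpha=\frac13$ is Proposition 1 of \cite{Tian}.
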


In this paper, we prove rigidity of Hawking mass under the following weaker assumption. Geometrically, condition $(\mathcal{H})$ says that on every great circle $C_y$, the directional derivative of $u$ in the normal direction $y$ vanishes at least at one point:

\begin{center}
\textbf{($\mathcal{H}$)} \hspace{.5cm} For every \( y \in \mathbb{S}^2 \), there exists \( x \in C_y \) such that \( \nabla u(x) \cdot y = 0 \).
\end{center}
where
\[
C_y=\partial H_y=\{x\in\mathbb{S}^2:x\cdot y=0\}.
\]
If $R_yx=x-2(x\cdot y)y$ denotes reflection with respect to the plane orthogonal to $y$, we write $\tilde u(x)=u(R_yx)$. \\ \\

The main result of this paper is the following theorem. 

\begin{theorem} \label{mainTheorem} Let $\frac{1}{3}\leq \alpha \leq 1 $, and $u$ be a solution of \eqref{mainPDE} satisfying $(\mathcal{H})$. Then $u$ is axially symmetric. Moreover, if \( \alpha = \frac{1}{3} \), then \( u \equiv 0 \). 
\end{theorem}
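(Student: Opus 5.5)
The approach is the moving-plane/reflection method, combined with the Sphere Covering Inequality (Theorem \ref{SCI}) and a topological argument on $\mathbb{S}^2$. Fix $y\in\mathbb{S}^2$ and compare $u$ with its reflection $\tilde u(x)=u(R_yx)$. Both functions solve \eqref{mainPDE}, so $w=u-\tilde u$ satisfies a linear equation $\frac{\alpha}{2}\Delta w + c(x) w=0$ with $c=\frac{e^u-e^{\tilde u}}{u-\tilde u}$. Moreover $w$ is odd under $R_y$, so $w=0$ on $C_y$. We first establish the key claim: for every $y$, either $w\equiv 0$, or the nodal set of $w$ meets $C_y$ in such a way that $\nabla w$ vanishes somewhere on $C_y$. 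Since $\nabla w(x)\cdot y = 2\nabla u(x)\cdot y$ on $C_y$ and $w$ vanishes along $C_y$, condition $(\mathcal{H})$ gives exactly one point $x_0\in C_y$ with $\nabla w(x_0)=0$. At such a point the nodal set of $w$ has at least four branches emanating from $x_0$ (two along $C_y$, and at least two transversal ones, symmetric under $R_y$). Consequently $H_y$ is split by $\{w=0\}$ into at least two nodal domains.

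Next we produce, inside the hemisphere $H_y$, two disjoint nodal domains $\omega_1$ (where $u>\tilde u$) and $\omega_2$ (where $u<\tilde u$). The reflection $R_y$ maps the nodal domains of $H_y$ to those of $H_{-y}$ with signs reversed, which yields four regions in total. We then pass to the plane by stereographic projection from a point off the closure of the relevant region, writing $e^u g_0$ in the form $e^{2v}dy$ with $\Delta v + e^{2v}=f$ and $f=(1-\alpha)$-type times the conformal factor of $g_0$. This $f$ is nonnegative precisely because $\alpha\le 1$, which makes Theorem \ref{SCI} applicable with $f_1=f_2$. On each $\omega_i$ the two solutions agree on the boundary and are ordered, so
\[
\int_{\omega_i}(e^u+e^{\tilde u})\,d\omega\ \geq\ \text{(normalized) }4\pi\cdot\tfrac{1}{\alpha}\text{-type bound}.
\]
After the rescaling forced by $\frac{\alpha}{2}$, this gives roughly $\int_{\omega_i}(e^u+e^{\tilde u})d\omega\ge \alpha$-independent mass $\geq 1$ in the normalized measure, with strict inequality unless $f\equiv0$. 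Summing over the disjoint domains $\omega_1,\omega_2$ and their reflections, we get total mass $\int_{\mathbb{S}^2}(e^u+e^{\tilde u})\,d\omega > 2$, contradicting $\int e^u=\int e^{\tilde u}=1$. The scaling constants here must be tracked carefully, and the range $\alpha\ge \frac13$ must enter through them. This is where the four-region count is needed: two regions alone are insufficient in the range $\frac13\le\alpha<\frac12$.

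The conclusion is that $u=\tilde u$ for every $y$ for which the nodal configuration above occurs. To finish, we use a topological/continuity argument in $y$ to show that $u$ is symmetric with respect to every plane containing some fixed axis, and hence that $u$ is axially symmetric. Concretely, consider the set of $y$ where $u\circ R_y\equiv u$; it is closed. If it missed a full great circle of directions, a degree argument (via the map $y\mapsto x(y)$ furnished by $(\mathcal{H})$) would produce a direction where the contradiction above applies. Once axial symmetry is known, the case $\alpha=\frac13$ follows from the known axially symmetric uniqueness results of \cite{GW-MR1760786,Lin1-MR1770683}, or from Theorem \ref{Theorem2}, since axially symmetric solutions satisfy \eqref{IntegralSymmetry}. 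This gives $u\equiv0$.

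The main obstacle is the counting step: ensuring, for $\alpha$ down to $\frac13$, that enough disjoint nodal domains exist for the Sphere Covering Inequality to give a total mass exceeding the available mass. What I would try first is a local analysis at the critical point $x_0$ on $C_y$. Using the Hartman–Wintner expansion of $w$ together with its $R_y$-oddness, I would show that $w$ vanishes to order at least $2$ at $x_0$, which forces at least six nodal sectors near $x_0$. I would then argue that nodal domains cannot reconnect globally without enclosing further critical points, possibly iterating the SCI bound on these sectors.
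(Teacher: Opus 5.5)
Your central step fails in the range that matters. With the constants tracked (the paper's $w$ satisfies $e^{w}\,dy=\frac{2}{\alpha}e^{u}\,dS$), the Sphere Covering Inequality gives, for each nodal domain $\Omega\subset H_y$ of $u-\tilde u$,
\[
\int_{\Omega}\big(e^{u}+e^{\tilde u}\big)\,d\omega\geq\alpha .
\]
This bound depends on $\alpha$, whereas $\int_{H_y}(e^{u}+e^{\tilde u})\,d\omega=\int_{\mathbb{S}^2}e^{u}\,d\omega=1$. So $k$ nodal domains in $H_y$ only give $k\alpha\leq 1$, strictly when $\alpha<1$. The zero of $\nabla u\cdot y$ on $C_y$ supplied by $(\mathcal{H})$ produces $k\geq 2$ by the Hopf lemma. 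That is a contradiction only for $\alpha\geq\frac12$; in that range your argument works and even gives $u\equiv 0$. For $\alpha<\frac12$ you need $k\geq 3$, and your local analysis cannot supply it. Take coordinates $(t,n)$ centred at the zero $x_0$, with $R_y$ acting as $n\mapsto -n$. Then $u(t,n)-u(t,-n)=2\,\partial_t\partial_n u(x_0)\,tn+O(|(t,n)|^3)$, so $R_y$-oddness permits vanishing order exactly $2$, which gives four sectors. Order $2$ never gives six sectors; that requires order $3$.

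More decisively, the claim you are trying to prove there is false. $(\mathcal{H})$ supplies such a zero for \emph{every} $y$, so your claim would make $u$ invariant under all reflections, hence constant, for all $\alpha\in[\frac13,1]$. But for a zonal function $u=U(x_3)$ one has $\nabla u\cdot y=U'(x_3)\,y_3$ on $C_y$, and $C_y$ always meets $\{x_3=0\}$. Hence every zonal solution that is even in $x_3$ satisfies $(\mathcal{H})$. Nonconstant ones exist for $\alpha$ slightly above $\frac13$: the Crandall--Rabinowitz bifurcation from $u=0$ along $P_2(x_3)$, with $P_2(s)=\frac12(3s^2-1)$, is transcritical because $\int_{-1}^{1}P_2^3\,ds\neq 0$. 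This is why the theorem only asserts axial symmetry.

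The missing idea is the paper's. Symmetry about $P_y$ is forced only by \emph{three} zeros of $\nabla u\cdot y$ on $C_y$ (Lemma \ref{threePointLemma}), and this is exactly where the threshold $\frac13$ comes from. If $u$ had no symmetry plane, every $C_x$ would carry one or two zeros. The chord $p_2-p_1$, or the oriented tangent at a single zero, then defines a continuous nonvanishing tangent field, contradicting the Hairy Ball Theorem. A second orthogonal plane comes from a nondegenerate minimum of $u$ on the equator of the first. $(\mathcal{H})$ at the common normal then gives a third plane (Lemmas \ref{threePointLemma} and \ref{secondPlaneLemma2}), and \cite{Tian} yields axial symmetry. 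Your unspecified degree argument does not replace this.

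Your $\alpha=\frac13$ step is also misjustified. Zonal solutions need not satisfy \eqref{IntegralSymmetry}, and \cite{GW-MR1760786,Lin1-MR1770683} concern $\alpha\geq\frac12$. It can be repaired: $(\mathcal{H})$ at the axis direction forces $U'(0)=0$, so $U$ is even by ODE uniqueness, and Theorem \ref{Theorem1} applies.
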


\begin{remark}
Note that Theorem \ref{mainTheorem} includes Theorems \ref{Theorem1} and \ref{Theorem2} as special cases. Indeed, if \( u(x) = u(-x) \) for all \( x \in \mathbb{S}^2 \), then the assumption \eqref{IntegralSymmetry} follows immediately. Moreover, if \eqref{IntegralSymmetry} holds for some \( y \), then:
\begin{align}
0 &= \int_{H_y} e^{u} \, dS - \int_{H_{-y}} e^{u} \, dS \\
  &= \int_{H_y}\big(e^u-e^{\tilde u}\big)\,dS \\
  &= -\frac{\alpha}{2}\int_{H_y}\big(\Delta u-\Delta\tilde u\big)\,dS \\
  &= \frac{\alpha}{2}\int_{C_y}\nabla(u-\tilde u)\cdot y\,ds \\
  &= \alpha\int_{C_y}\nabla u\cdot y\,ds,
\end{align}
where in the fourth equality we used that the outward unit conormal to $H_y$ along $C_y$ is $-y$.
Therefore, \( \nabla u(x) \cdot y \) changes sign on \( C_y \), and hence it must have at least two zeros. Thus the hypothesis \((\mathcal{H})\) is satisfied.

\end{remark}

The proof of Theorem \ref{mainTheorem} relies on the Sphere Covering Inequality and the Hairy Ball Theorem applied to a continuous tangent vector field on $\mathbb{S}^2$ that we will construct in the next section.

\section{Symmetry about a plane}

In this section, we prove that if \( u \) satisfies the hypothesis \( (\mathcal{H}) \), then it must be symmetric about the plane passing through the origin with normal vector \( x \in \mathbb{S}^2 \). We will demonstrate that if \( u \) is not symmetric about any plane passing through the origin, then there exists a continuous non-vanishing tangent vector field \( \mathbb{F}(x) \) on \( \mathbb{S}^2 \), which would violate the Hairy Ball Theorem.

\begin{definition}\label{C(x)}
 For any \( x \in \mathbb{S}^2 \), we define \( C_x \) to be the great circle obtained from the intersection of the plane passing through the origin with the normal vector \( x \), denoted by \( P_x \), and \( \mathbb{S}^2 \). The curve \( C_x \) is said to have a positive orientation if it rotates counter-clockwise when viewed from point \( x \) while traveling along \( C_x \). The plane \( P_x \) divides \( \mathbb{S}^2 \) into two hemispheres, and by \( \mathbb{S}^2_{+}(x) \) we denote the hemisphere containing \( x \). Similarly, \( \mathbb{S}^2_{-}(x) \) denotes the hemisphere that does not contain \( x \).

\end{definition}

\begin{lemma}\label{threePointLemma}
Let $\frac{1}{3}\leq \alpha \leq 1 $. If there are three distinct points $p_i\in C_{x_0}$ for some $x_0\in \mathbb{S}^2$ such that 
\begin{align}\label{threePoint}
\nabla u (p_i) \cdot x_0=0, \ \ i=1,2,3 
\end{align}
then $u$ is evenly symmetric about $P_{x_0}$, i.e. $u(x)=u(\tilde{x})$, where $\tilde{x}$ is the reflection of $x$ with respect to the plane $P_{x_0}$.
\end{lemma}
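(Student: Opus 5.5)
We argue by contradiction, comparing $u$ with its reflection $\tilde u(x)=u(R_{x_0}x)$, which solves the same equation \eqref{mainPDE}. Set $w=u-\tilde u$ and suppose $w\not\equiv 0$. Then $w$ is odd with respect to $P_{x_0}$, vanishes on $C_{x_0}$, and satisfies the linear equation
\[
\frac{\alpha}{2}\Delta w + c(x)\, w = 0,\qquad c(x)=\frac{e^u-e^{\tilde u}}{u-\tilde u}>0 .
\]
On $C_{x_0}$ we have $\nabla w\cdot x_0 = 2\nabla u\cdot x_0$. Hence hypothesis \eqref{threePoint} says exactly that $w$ vanishes to second order at $p_1,p_2,p_3$: both $w$ and $\nabla w$ vanish there, since the tangential derivative along $C_{x_0}$ is zero automatically. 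By the local structure of solutions of such elliptic equations (Bers/Hartman--Wintner), near each $p_i$ the nodal set $\{w=0\}$ consists of at least two smooth arcs crossing at equal angles. One of these arcs is $C_{x_0}$, so at each $p_i$ at least one further nodal arc enters the open hemisphere $\mathbb{S}^2_+(x_0)$.

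The next step is topological. In the closed hemisphere $\overline{\mathbb{S}^2_+(x_0)}$ the nodal set of $w$ consists of $C_{x_0}$ together with the arcs emanating from $p_1,p_2,p_3$ into the interior. By unique continuation, $w$ cannot vanish on an open set. Following these arcs, they must either end on $C_{x_0}$ or meet each other, since nodal arcs cannot terminate in the interior. Either way, the three arcs cut the hemisphere into at least three nodal domains $\omega_1,\omega_2,\omega_3\subset \mathbb{S}^2_+(x_0)$, on each of which $w$ has a fixed sign and vanishes on the boundary. Reflecting through $P_{x_0}$ gives three more nodal domains in $\mathbb{S}^2_-(x_0)$. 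Since $w$ is odd, the reflected domains have equal $e^u$- and $e^{\tilde u}$-areas, with the roles of $u$ and $\tilde u$ interchanged.

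Now we apply Theorem \ref{SCI}. Pass to the plane by stereographic projection from a point outside the closure of the relevant $\omega_j$; this is possible since each $\omega_j$ lies in an open hemisphere. Writing $e^{u}$ in conformal coordinates, $v_1,v_2$ correspond to $u,\tilde u$ (suitably scaled). Equation \eqref{mainPDE}, $\Delta u = \frac{2}{\alpha}(1-e^u)$, becomes $\Delta v+e^{2v}=f$ with $f\ge 0$ exactly when the curvature is at most $1$, which is where $\alpha\le 1$ enters. After the rescaling $v=\frac{u}{2}+\frac12\log\frac{2}{\alpha}+(\text{conformal factor})$, one gets $f = \left(\frac{2}{\alpha}-2\right)\cdot(\text{positive})\ge 0$, and $f_1=f_2$ because $u$ and $\tilde u$ solve the same equation. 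On each nodal domain, $\max(u,\tilde u)$ plays the role of $v_2$. Theorem \ref{SCI} then gives
\[
\int_{\omega_j}\left(e^u+e^{\tilde u}\right)d\omega \ \ge\ \frac{\alpha}{2}\cdot(\text{normalized }4\pi),
\]
namely at least $\alpha$ in the normalized measure where $\int_{\mathbb{S}^2} e^u\,d\omega=1$. We have six disjoint domains $\omega_j\cup R_{x_0}\omega_j$, and the $e^{\tilde u}$-area of $\omega_j$ equals the $e^u$-area of $R_{x_0}\omega_j$. Summing over $j=1,2,3$ gives
\[
2\int_{\mathbb{S}^2}e^u\,d\omega \ \ge\ \sum_j \int_{\omega_j\cup R\omega_j}\left(e^u+e^{\tilde u}\right)\ge 3\cdot 2\alpha\cdot\tfrac{?}{}\,,
\]
that is, $2\ge 6\alpha$ up to the precise constant. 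Contradiction needs $\alpha>\frac13$. At $\alpha=\frac13$, equality forces $f\equiv0$ and the spherical-cap rigidity of Theorem \ref{SCI}; at $\alpha=\frac13$ we have $f\not\equiv 0$, so the inequality is strict. This is why the threshold $\frac13$ appears, and it confirms that three points, rather than two, are needed.

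I expect the main obstacle to be the topological step: guaranteeing that the three vanishing points produce three genuinely disjoint nodal domains in one hemisphere. The extra arcs could collide, and the degenerate configurations need care. I would handle this by a planar graph and Euler-characteristic count on the nodal graph of $w$ restricted to the closed hemisphere. Each vertex $p_i$ on the boundary has interior degree at least $1$, interior vertices have even degree at least $4$, and an Euler-formula count then yields at least $3$ faces. The bookkeeping of the constants in the Sphere Covering Inequality is routine by comparison.
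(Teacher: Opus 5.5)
Your proposal is correct and follows the paper's proof. The odd difference $u-\tilde u$ vanishes to second order at the three points, the local nodal structure gives at least three nodal domains in one hemisphere, and the Sphere Covering Inequality on each domain and its reflection yields $1\ge 3\alpha$ (the paper's $8\pi/\alpha\ge 24\pi$), with strict inequality at $\alpha=\frac13$ because the right-hand side is positive. Your Euler count for the three domains and projection from a point off the closed hemisphere are more careful than the paper. To fix the constant you left open, take $v=\frac u2+\frac12\log\frac1\alpha+\log\frac{2}{1+|y|^2}$; this gives $\Delta v+e^{2v}=\frac{4(1/\alpha-1)}{(1+|y|^2)^2}$, independent of $u$, and hence exactly $\int_{\omega_j}(e^u+e^{\tilde u})\,d\omega\ge\alpha$.
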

{\bf Proof.} Suppose there are three distinct points $p_i\in \mathbb{S}^2$, $i=1,2,3$, such that \eqref{threePoint} holds. Without loss of generality we may assume that $p_1, p_2, p_3$ all lie on the $x_1x_3$-plane and $p_i\neq N:=(0,0,1)$ for $i=1,2,3$. Now let $\Pi$ be the stereographic projection $S^2\rightarrow \R^2$ with respect to the north pole $N=(0,0, 1)$: 

\[\Pi:= \left( \frac{x_1}{1-x_3}, \frac{x_2}{1-x_3}\right).\]
Let $u$ be a solution of \eqref{mainPDE} and 
\[\bar{u}(y):=u(\Pi^{-1}(y)) \ \ \hbox{for}\ \ y=(y_1, y_2)\in \R^2.\]
Then $\bar{u}$ satisfies 
\begin{align}
\Delta \bar{u}+\frac{8}{\alpha(1+|y|^2)^2}(e^{\bar{u}}-1)=0 \ \ \hbox{in} \ \ \R^2. 
\end{align}
Now set
\begin{align}
v:=\bar{u}-\frac{2}{\alpha} \ln (1+|y|^2)+\ln (\frac{8}{\alpha}),
\end{align}
then $v$ satisfies 

\begin{equation*}
\Delta v+(1+|y|^2)^{2(\frac{1}{\alpha}-1)}e^{v}=0 \ \ \hbox{in} \ \ \R^2,
\end{equation*}
and 
\begin{equation*}
\int_{\mathbb{R}^2}(1+|y|^2)^{2(\frac{1}{\alpha}-1)}e^{v} dy=\frac{8\pi}{\alpha}. 
\end{equation*}
Let $w=\ln ((1+|y|^2)^{2(\frac{1}{\alpha}-1)}e^{v})$. Then $w$ satisfies 
\begin{align}\label{w}
\Delta w+e^w=\frac{ 8\left( \frac{1}{\alpha} - 1 \right)}{ \left(1+|y|^2\right)^2}\geq 0
\end{align}
with 
\begin{align}\label{Total}
\int_{\mathbb{R}^2}e^wdy=\frac{8\pi}{\alpha}.
\end{align}
The three distinct points $p_1,p_2,p_3$ are mapped to three points $P_1, P_2, P_3 \in \mathbb{R}^2$, lying on $y_1$-axis, with 
\begin{align}
\nabla w (P_i)\cdot (0,1)=0. 
\end{align}
Now let $\varphi(y)=w(y)-w(\tilde{y})$, where $\tilde{y}=(y_1,-y_2)$. Since $\varphi(y)$ satisfies the linear elliptic equation 
\begin{align}\label{linearElliptic}
\Delta \varphi+\left(\frac{e^{w(y)}-e^{w(\tilde{y})}}{w(y)-w(\tilde{y})}\right)\varphi=0
\end{align}
we have $\varphi(P_i)=0$ and $\nabla\varphi(P_i)=0$. If the $y_1$-axis were the only nodal arc near $P_i$, the Hopf Lemma applied to either adjacent nodal domain would imply $\partial_{y_2}\varphi(P_i)\neq0$, a contradiction. Hence another nodal arc enters the upper half-plane from $P_i$, and by reflection there is a corresponding nodal arc in the lower half-plane. Equivalently, by the local structure theorem for nodal sets, near each $P_i$ we have $\varphi(y)=Q_i(y-P_i)+$ higher-order terms, where $Q_i$ is a nonzero harmonic homogeneous polynomial of degree $m_i\geq2$. Thus the nodal set divides a neighborhood of each $P_i$ into at least four regions. Therefore, the nodal line of $\varphi$ must divide $\mathbb{R}^2_{+}$ into at least three regions $\Omega_1,\Omega_2, \Omega_3$. Now notice that on each $\Omega_i$ the equation \eqref{w} has two solutions $w_1(y)=w(y)$ and $w_2(y)=w(\tilde{y})$ with $w_1=w_2$ on $\partial \Omega_i$, $i=1,2,3$. Hence it follows from the Sphere Covering Inequality that 
\[
\frac{8\pi}{\alpha}=\int_{\mathbb{R}^2}e^{w}dy
\geq \sum_{i=1}^{3}\int_{\Omega_i}\big(e^{w(y)}+e^{w(\tilde{y})}\big)dy
\geq 24\pi.
\]
This is impossible when $\alpha>\frac13$. If $\alpha=\frac13$, equality would have to hold in each application of the Sphere Covering Inequality, which is impossible because the right-hand side of \eqref{w} is then strictly positive. Thus we again obtain a contradiction. Therefore, $u$ must be symmetric about $P_{x_0}$. \hfill $\Box$ 
\vspace{.4 cm}

\subsection{A non-vanishing tangent vector field on $\mathbb{S}^2$}

In this subsection, assuming the hypothesis \( (\mathcal{H}) \) and the condition that a solution \( u \) of Equation \eqref{mainPDE} is not symmetric about any plane passing through the origin, we construct a tangent vector field on \( \mathbb{S}^2 \). First note that, by Lemma \ref{threePointLemma}, for every $x\in \mathbb{S}^2$ there are at most two points $p_1,p_2$ on $C_x$ such that 
\[\nabla u (p_i)\cdot x=0, \ \ i=1,2.\]
Hence 
\begin{align}\label{DisjointSets}
\mathbb{S}^2=\mathcal{S}_2\cup \mathcal{S}_1,
\end{align}
where $\mathcal{S}_i$ is the set of all points $x \in \mathbb{S}^2$ for which there are exactly $i$ points $p\in C_x$ where the gradient of $u$ at $p$ is orthogonal to $x$, i.e. $\nabla u(p)\cdot x=0.$ Define the tangent vector field $\mathbb{F}$ on $\mathbb{S}^2$ as follows. \\ \\
\textit{Case I}. Let $x\in \mathcal{S}_2$. Then there exist exactly two distinct points $p_1, p_2 \in C_x$ such that $\nabla u(p_i)\cdot x=0 $, $i=1,2$. Without loss of generality we may assume that $p_1$ is the point where $\nabla u(p)\cdot x$ changes sign from $+$ to $-$ while $p$ moves in counterclockwise direction, and $p_2$ is the point where the sign changes from $-$ to $+$. Now define $\mathbb{F}$ to be a unit tangent vector on $\mathbb{S}^2$ at $x$ pointing in the direction of $\frac{p_2-p_1}{|p_2-p_1|}$. \\ \\ 
\textit{Case II. } Let $x\in \mathcal{S}_1$. Then there exists only one point $p \in C_x$ such that $\nabla u(p)\cdot x=0 $. If \(\nabla u \cdot x \geq 0\) on \(C_x\), then let \(\mathbb{F}\) be the unit tangent vector to \(C_x\) at point \(p\), pointing in the counterclockwise direction. Similarly, if \(\nabla u \cdot x \leq 0\) on \(C_x\), then let \(\mathbb{F}\) be the unit tangent vector to \(C_x\) at point \(p\), pointing in the clockwise direction. \\ \\

\begin{figure}[H]
\centering
\begin{tikzpicture}[scale=1.2]

\def\r{1}

\coordinate (C1) at (0,0);
\draw (C1) circle (\r);

\coordinate (pone) at ({\r*cos(35)},{\r*sin(35)});
\coordinate (ptwo) at ({\r*cos(170)},{\r*sin(170)});

\fill (pone) circle (1.5pt);
\node[above right] at (pone) {$p_1$};

\fill (ptwo) circle (1.5pt);
\node[left] at (ptwo) {$p_2$};

\draw[dashed] (pone) -- (ptwo);

\draw[->, thick] (pone) -- ($(pone)!0.42!(ptwo)$);
\node at ($(pone)!0.45!(ptwo)+(0,0.18)$) {$\mathbb{F}$};

\node at (0,1.25) {$-$};   
\node at (0,-1.25) {$+$};  

\node at (0,-1.8) {Case I};

\coordinate (C2) at (4,0);
\draw (C2) circle (\r);

\coordinate (pleft) at ($(C2)+({\r*cos(100)},{\r*sin(100)})$);
\fill (pleft) circle (1.5pt);
\node[above] at ($(pleft)+(0,0.28)$) {$p$};

\draw[->, thick] (pleft) -- ++({-0.9*sin(100)},{0.9*cos(100)});
\node at ($(pleft)+(-0.55,0.28)$) {$\mathbb{F}$};

\node at (4,-1.25) {$+$};

\coordinate (C3) at (7,0);
\draw (C3) circle (\r);

\coordinate (pright) at ($(C3)+({\r*cos(80)},{\r*sin(80)})$);
\fill (pright) circle (1.5pt);
\node[above] at ($(pright)+(0,0.28)$) {$p$};

\draw[->, thick] (pright) -- ++({0.9*sin(80)},{-0.9*cos(80)});
\node at ($(pright)+(0.55,0.28)$) {$\mathbb{F}$};

\node at (7,-1.25) {$-$};

\node at (5.5,-1.8) {Case II};

\end{tikzpicture}

\caption{Definition of the continuous non-vanishing vector field $\mathbb{F}$}
\end{figure}

In the next subsection, we prove that the vector field \( \mathbb{F} \) defined above on \( \mathbb{S}^2 \) is indeed continuous.

\subsection{Continuity of $\mathbb{F}$}
Let $u$ be a solution of \eqref{mainPDE}. For $x_0\in \mathbb{S}^2$ define 
\begin{align}\label{Phi}
\Phi_{x_0}(x):=u(x)-u(\hat{x}), \ \ x\in \mathbb{S}^2,
\end{align}
where $\hat{x}$ is the reflection of $x$ with respect to the plane $P_{x_0}$. Also define the nodal set of $\Phi_{x_0}(x)$ as follows
\begin{align}\label{nodalSet}
\mathcal{N}_{x_0}:=\{x\in \mathbb{S}^2: \Phi_{x_0}(x)=0\}.
\end{align}
The following proposition will play a crucial role in the analysis of the nodal set $\mathcal{N}_{x_0}$. 

\begin{proposition}\label{OnlyOneOmega}
Suppose $\frac{1}{3}\leq \alpha \leq 1$, and let $u$ be a solution of \eqref{mainPDE} that is not symmetric about any plane passing through the origin. 
Then $\mathcal{N}_{x_0}$ divides $\mathbb{S}^2$ into two pairs of open sets $\Omega_i, \tilde{\Omega}_{i}$, $i=1,2$, such that 
\begin{align}\label{4regions}
\mathbb{S}^2=\mathcal{CL}( \Omega_1\cup \tilde{\Omega}_{1}\cup \Omega_2 \cup \tilde{\Omega}_{2} )
\end{align}
where $\tilde{\Omega}_{i}$ is the reflection of $\Omega_{i}$ with respect to the plane $P_{x_0}$, and $\mathcal{CL}$ is the closure set operator. Moreover, $\nabla u(p)\cdot x_0=0$ for $p\in C_{x_0}$ if and only if 
\begin{align}\label{normalDerivativeCondition}
p\in C_{x_0}\cap \partial \Omega_1 \cap \partial \Omega_2. 
\end{align}
\end{proposition}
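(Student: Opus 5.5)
Throughout I would work in the stereographic picture of Lemma \ref{threePointLemma}. Rotate so that $x_0=(0,1,0)$, i.e.\ $P_{x_0}$ is the $x_1x_3$-plane, and project from a north pole $N\in C_{x_0}$. Then the reflection becomes $\tilde y=(y_1,-y_2)$, $C_{x_0}$ becomes the $y_1$-axis (together with $\infty$), and $\Phi_{x_0}$ becomes $\varphi(y)=w(y)-w(\tilde y)$ with $w$ as in \eqref{w}. The function $\varphi$ is odd in $y_2$ and solves the linear equation \eqref{linearElliptic}. Since $u$ is not symmetric about $P_{x_0}$, $\varphi\not\equiv0$. By unique continuation and the local structure of nodal sets, $\mathcal N_{x_0}$ is then a finite union of $C^1$ arcs meeting at isolated singular points, and it always contains $C_{x_0}$. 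So the complement $\mathbb{S}^2\setminus\mathcal N_{x_0}$ is a union of open nodal domains, and each domain in $\mathbb{S}^2_+(x_0)$ is mirrored by one in $\mathbb{S}^2_-(x_0)$. It therefore suffices to show that $\mathbb{S}^2_+(x_0)\setminus\mathcal N_{x_0}$ consists of exactly two nodal domains $\Omega_1,\Omega_2$; setting $\tilde\Omega_i$ to be their reflections then gives \eqref{4regions}.

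\emph{Upper bound.} On any nodal domain $\Omega\subset\mathbb{S}^2_+(x_0)$, the functions $w_1=w$ and $w_2=w(\tilde{\cdot})$ solve \eqref{w} with the same right-hand side and agree on $\partial\Omega$. The Sphere Covering Inequality (Theorem \ref{SCI}) then gives $\int_\Omega(e^{w}+e^{w(\tilde y)})\,dy\ge4\pi$, with equality excluded when $\alpha<1$. Summing over $k$ such domains in the upper half plane, whose reflections are disjoint from them, gives $8\pi/\alpha\ge 4\pi k$, hence $k\le 2/\alpha\le 6$. This crude bound is not enough, so I would exploit the hypothesis that $u$ is not symmetric about \emph{any} plane. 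The plan is to vary $x_0$ continuously and use the three-point Lemma \ref{threePointLemma} to control how many components can meet $C_{x_0}$. I also expect a refined counting: any interior nodal domain that does not touch $C_{x_0}$ still costs $4\pi$, and it comes with its mirror image.

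\emph{Lower bound.} I would show $k\ge2$ by showing that $\varphi$ changes sign in the upper half plane. Suppose instead $\varphi>0$ on all of $\mathbb{R}^2_+$. Since $\int e^w$ over the upper and lower halves differ, integrating \eqref{linearElliptic} as in the Remark shows that $\int_{C_{x_0}}\partial_{y_2}\varphi$ and the mass difference have matching signs. That alone gives no contradiction, so I would instead use a moving-plane or rotation argument. As $x_0$ is replaced by $-x_0$, the sign of $\Phi$ flips. Then the set of $x_0$ for which $\Phi_{x_0}$ has a single sign on $\mathbb{S}^2_+(x_0)$ is closed. Its boundary points would force a degenerate zero of $\Phi_{x_0}$ on $C_{x_0}$ and hence extra nodal arcs, which the Sphere Covering count together with non-symmetry should rule out.

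\emph{Boundary characterization.} Since $\Phi_{x_0}=0$ on $C_{x_0}$ and $\nabla\Phi_{x_0}(p)=2(\nabla u(p)\cdot x_0)\,x_0$ for $p\in C_{x_0}$, at a point where $\nabla u(p)\cdot x_0\ne0$ the nodal set near $p$ is just $C_{x_0}$. In that case $p$ lies on the boundary of only one upper domain, so $p\notin\partial\Omega_1\cap\partial\Omega_2$. Conversely, if $\nabla u(p)\cdot x_0=0$, the nodal-structure argument from the proof of Lemma \ref{threePointLemma} produces an extra nodal arc entering $\mathbb{S}^2_+(x_0)$ at $p$, which separates two upper domains. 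As there are only two such domains, $p\in\partial\Omega_1\cap\partial\Omega_2$.

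The main obstacle is the exact count ``two'' rather than ``at most six'' (or at most two for $\alpha=1/3$). The intended route is to show that the arcs of $\mathcal N_{x_0}\setminus C_{x_0}$ must terminate on $C_{x_0}$, at critical points of the type in \eqref{normalDerivativeCondition}. Combined with the at-most-two such points from Lemma \ref{threePointLemma} and the $4\pi$ cost of each domain, this should force exactly two domains.
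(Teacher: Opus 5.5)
There is a genuine gap, and it starts with a normalization slip in your upper bound.

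Theorem \ref{SCI} is stated for $\Delta v+e^{2v}=f$. For $w$ solving \eqref{w} you must put $v=\frac12(w-\ln 2)$, which gives $\Delta v+e^{2v}=\frac f2\ge 0$. On a nodal domain $\Omega$ the inequality then reads $\int_\Omega\big(e^{w(y)}+e^{w(\tilde y)}\big)\,dy\ge 8\pi$, not $4\pi$; this is the constant behind the $24\pi$ in Lemma \ref{threePointLemma}. With the correct constant, $k$ upper nodal domains give $8\pi/\alpha\ge 8\pi k$. So $k\le 2$ for $\frac13<\alpha<1$, and also for $\alpha=\frac13$, because equality is excluded when the right side of \eqref{w} is positive. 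This is exactly the paper's ``more than four regions forces symmetry'' step. What you call the main obstacle is therefore a one-line consequence of the Sphere Covering Inequality.

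The substitute route you sketch for the count would not work anyway. Nothing forces the arcs of $\mathcal N_{x_0}\setminus C_{x_0}$ to end on $C_{x_0}$: a closed nodal loop inside the hemisphere is not excluded by the hypotheses, and ruling out such an $\Omega_x\Subset H_y$ is exactly what $(\mathcal{H})$ is used for in Section 3. A single zero of higher vanishing order emits several arcs, so ``at most two such points'' from Lemma \ref{threePointLemma} does not bound the number of domains. Finally, the proposition does not assume $(\mathcal{H})$, so there may be no such points at all.

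The real content is the lower bound: $\Phi_{x_0}$ must take both signs in $\mathbb{S}^2_+(x_0)$ for \emph{every} $x_0$. Here your sketch lists ingredients without assembling them, and it attributes the decisive step to the Sphere Covering count, which plays no role there.

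The paper argues by connectedness on the set $\mathcal A$ of $x_0$ for which \eqref{4regions} holds.
\begin{itemize}
\item $\mathcal A\neq\emptyset$: if $C_{x_0}$ passes through a critical point $p$ of $u$, then $\nabla\Phi_{x_0}(p)=0$, so $\Phi_{x_0}$ changes sign near $p$ inside $\mathbb{S}^2_+(x_0)$.
\item $\mathcal A$ is open, because two points of opposite sign persist under small rotations.
\item $\mathcal A$ is closed, via the uniform area bound \eqref{AreaLowerBound} from the Sphere Covering Inequality together with unique continuation.
\end{itemize}

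Your single-sign set $\mathcal B=\mathbb{S}^2\setminus\mathcal A$ can be handled just as well, but the tools are the maximum principle and the Hopf lemma.
\begin{itemize}
\item $\mathcal B$ is closed: a limit of one-signed $\Phi_{x_n}$ is $\ge 0$, hence $>0$ by the strong maximum principle, since $\Phi_{x_0}\not\equiv 0$.
\item $\mathcal B$ is open: for $x_0\in\mathcal B$, the Hopf lemma gives $\nabla u\cdot x_0\neq 0$ on all of $C_{x_0}$, so positivity survives a small rotation. It is Hopf, not a degenerate-zero-plus-Sphere-Covering count, that excludes boundary points of $\mathcal B$.
\end{itemize}
Then $\mathcal B=\mathbb{S}^2$ is ruled out in either of two ways. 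One is the critical-point choice above. The other is your antipodal observation: the sign of $\Phi_x$ on $\mathbb{S}^2_+(x)$ would be a locally constant odd map $\mathbb{S}^2\to\{\pm 1\}$, which is impossible on the connected sphere.

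Your boundary characterization in the third paragraph is fine and agrees with the paper's Hopf-lemma argument.
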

{\bf Proof.} First choose $x_0$ so that a critical point $p$ of $u$ lies on $C_{x_0}$. Since $\nabla u(p)\cdot x_0=0$ and $C_{x_0}\subset \mathcal{N}_{x_0}$, it follows from an argument similar to the one used in the proof of Lemma \ref{threePointLemma} that the nodal line $\mathcal{N}_0$ of $\Phi_{x_0}$ divides a neighborhood of $p$ into at least four regions, and hence it divides $\mathbb{S}^2$ into exactly four regions satisfying \eqref{4regions}. Note that if $\mathcal{N}_0$ divides $\mathbb{S}^2$ into more than four regions, then it will divide $\mathbb{S}^2$ into at least three pairs of subsets of $\mathbb{S}^2$, $(\Omega_i, \tilde{\Omega}_i)$ ($i=1,2,3$), and hence with an argument similar to that in the proof of Lemma \ref{threePointLemma}, the Sphere Covering Inequality implies that $u$ is symmetric about $P_{x_0}$. Thus \eqref{4regions} holds if $C_{x_0}$ contains a critical point of $u$. 

Let $\mathcal{A}$ be the set of points in $\mathbb{S}^2$ for which \eqref{4regions} holds. By the above argument we know that $\mathcal{A}\neq \emptyset$. We next show that $\mathcal{A}$ is open. Fix $x_0\in\mathcal{A}$ and choose one point in each of the two nodal domains contained in $\mathbb{S}^2_+(x_0)$. The values of $\Phi_{x_0}$ at these two points have opposite signs. After identifying nearby hemispheres by a small rotation, the corresponding values of $\Phi_x$ retain these signs for $x$ sufficiently close to $x_0$. Hence $\Phi_x$ has at least two nodal domains in $\mathbb{S}^2_+(x)$ and, by reflection, at least four nodal domains on $\mathbb{S}^2$. If there were more than four, the Sphere Covering Inequality would imply symmetry about $P_x$, contrary to our assumption. Thus \eqref{4regions} persists for all $x$ sufficiently close to $x_0$, and $\mathcal{A}$ is open.

Next we show that $\mathcal{A}$ is also closed. Let $x_n\in\mathcal{A}$ converge to $x_0\in\mathbb{S}^2$. Without loss of generality, after rotating coordinates, we may assume that $C_{x_0}$ lies in the plane $x_1=0$. With an argument similar to that in the proof of Lemma \ref{threePointLemma} and using stereographic projection, we obtain a function $w:\mathbb{R}^2\rightarrow\mathbb{R}$ satisfying \eqref{w}. If $\Omega_i$ is one of the two nodal domains in a hemisphere and $\omega_i$ is its stereographic image, then $w_1(y)=w(y)$ and $w_2(y)=w(\tilde y)$ both satisfy \eqref{w} on $\omega_i$ and agree on $\partial\omega_i$. Hence the Sphere Covering Inequality gives
\[
\int_{\omega_i}\big(e^{w(y)}+e^{w(\tilde y)}\big)\,dy\geq 8\pi.
\]
Since
\[
e^w\,dy=\frac{2}{\alpha}e^u\,dS,
\]
and $u$ is bounded on $\mathbb{S}^2$, it follows that the spherical area of each $\Omega_i$ is bounded from below by a positive constant independent of $x_n$; namely,
\begin{align}\label{AreaLowerBound}
\mu(\Omega_i)\geq c,\qquad i=1,2,
\end{align}
for some $c>0$. Thus none of the four nodal regions can collapse in the limit $x_n\to x_0$. Indeed, $\Phi_{x_n}\to\Phi_{x_0}$ uniformly, and if one of the two signs disappeared in the limiting hemisphere, the corresponding nodal domains of uniformly positive area would accumulate on a subset of positive area on which $\Phi_{x_0}=0$. By the local structure theorem (or unique continuation) for the linear elliptic equation satisfied by $\Phi_{x_0}$, this would force $\Phi_{x_0}\equiv0$, contrary to the assumption that $u$ is not symmetric about $P_{x_0}$. Hence the limiting nodal set has at least two nodal domains in each hemisphere, and therefore at least four on $\mathbb{S}^2$. If it had more than four, the Sphere Covering Inequality would again imply symmetry about $P_{x_0}$. Consequently \eqref{4regions} also holds at $x_0$, and $\mathcal{A}$ is closed. Hence $\mathcal{A}\neq \emptyset$ is both open and closed, and therefore
\[\mathcal{A}=\mathbb{S}^2.\]
Thus \eqref{4regions} holds for all $x_0 \in \mathbb{S}^2.$ To prove \eqref{normalDerivativeCondition}, let $\varphi=u(x)-u(\tilde{x})$, where $\tilde{x}$ is the reflection of $x$ with respect to $P_{x_0}$. Then $\varphi$ satisfies 
\[\frac{\alpha}{2}\Delta \varphi(x)+\frac{e^{u(x)}-e^{u(\tilde{x})}}{u(x)-u(\tilde{x})}\varphi (x)=0 \ \ x\in \mathbb{S}^2_{+}(x_0),\]
and $\varphi=0$ on $C_{x_0}$. It is now easy to see that \eqref{normalDerivativeCondition} follows from the Hopf lemma. \hfill $\Box$

\vspace {.4cm}

\vspace {.4cm}

We shall also need the following result. 
\begin{lemma}\label{BoundaryofSs}
Suppose $\frac{1}{3}\leq \alpha \leq 1$, and let $u$ be a solution of \eqref{mainPDE} satisfying $(\mathcal{H})$ that is not symmetric about any plane passing through the origin. Also let $\mathcal{S}_1, \mathcal{S}_2$ be defined as in \eqref{DisjointSets}. Then $\mathcal{S}_2$ is an open subset of $\mathbb{S}^2$ and $\mathcal{S}_1$ is closed. In particular, 
\[ \partial \mathcal{S}_2\subset \mathcal{S}_1.\]
\end{lemma}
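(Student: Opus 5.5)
Since $\mathbb{S}^2=\mathcal{S}_1\cup\mathcal{S}_2$ is a disjoint union by \eqref{DisjointSets}, the claim that $\mathcal{S}_1$ is closed follows once $\mathcal{S}_2$ is shown to be open. Then $\partial\mathcal{S}_2=\overline{\mathcal{S}_2}\setminus\mathcal{S}_2\subset\mathcal{S}_1$. So the whole task is to prove that $\mathcal{S}_2$ is open. The natural tool is the function
\[
g_x(\theta):=\nabla u(\gamma_x(\theta))\cdot x,
\]
where $\gamma_x$ is a parametrization of $C_x$ depending continuously (indeed smoothly) on $x$ locally. Since $u$ is smooth, the family $g_x$ depends continuously on $x$ in the $C^0$ topology on the circle. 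The zeros of $g_x$ are exactly the points counted in the definition of $\mathcal{S}_i$.

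Fix $x_0\in\mathcal{S}_2$, with zeros $p_1,p_2\in C_{x_0}$ of $g_{x_0}$. The key point is that $g_{x_0}$ changes sign at both $p_1$ and $p_2$, so that $x_0$ lies in the ``Case I'' situation. I would get this from Proposition \ref{OnlyOneOmega}. The nodal set $\mathcal{N}_{x_0}$ splits $\mathbb{S}^2_+(x_0)$ into exactly two nodal domains $\Omega_1,\Omega_2$ of opposite sign for $\Phi_{x_0}$. By \eqref{normalDerivativeCondition}, the zeros of $g_{x_0}$ are exactly the points of $C_{x_0}\cap\partial\Omega_1\cap\partial\Omega_2$. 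On the open arcs of $C_{x_0}$ adjacent to $\Omega_1$ only, the Hopf lemma applied to $\Phi_{x_0}$ forces the normal derivative $\nabla\Phi_{x_0}\cdot x_0=2\nabla u\cdot x_0$ to have a fixed sign determined by the sign of $\Phi_{x_0}$ in $\Omega_1$. Similarly, on the arcs adjacent to $\Omega_2$ it has the opposite sign. Since $\partial\Omega_1\cap C_{x_0}$ and $\partial\Omega_2\cap C_{x_0}$ are separated exactly by $p_1,p_2$, the function $g_{x_0}$ is strictly positive on one open arc and strictly negative on the other.

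With strict sign change established, openness follows from the intermediate value theorem. Pick points $q_\pm$ on the two arcs with $\pm g_{x_0}(q_\pm)>0$. By continuity, for $x$ near $x_0$ the corresponding rotated points still satisfy $\pm g_x>0$. Hence $g_x$ vanishes on each of the two complementary arcs between them, giving at least two distinct zeros on $C_x$. By Lemma \ref{threePointLemma} together with the non-symmetry assumption, $g_x$ has at most two zeros. Thus $x\in\mathcal{S}_2$, and $\mathcal{S}_2$ is open.

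The main obstacle I expect is the sign-change step: ruling out that a zero of $g_{x_0}$ is a touching zero where $g$ keeps its sign. A touching zero would allow the two zeros to merge or vanish under perturbation. I would handle it via Proposition \ref{OnlyOneOmega} as above. The care needed is in checking that the arcs of $C_{x_0}$ between $p_1$ and $p_2$ each border only one nodal domain. This holds because any boundary point of both domains on $C_{x_0}$ is, by \eqref{normalDerivativeCondition}, itself a zero of $g_{x_0}$, and there are only two such zeros.
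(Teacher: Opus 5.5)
Your proposal is correct and follows the paper's proof step for step: opposite signs of $\nabla u\cdot x_0$ on the two arcs come from Proposition \ref{OnlyOneOmega} and the Hopf lemma; strict opposite-sign values persist under small rotations, and the intermediate value theorem then gives at least two zeros; Lemma \ref{threePointLemma} caps the count at two, and closedness of $\mathcal{S}_1$ follows by complementation. You, like the paper, only assert that the two arcs border \emph{different} nodal domains (you prove only that each borders exactly one), but this follows quickly: if both bordered $\Omega_1$, then a curve in $\Omega_2$ joining $p_1$ and $p_2$ (both in $\partial\Omega_2$ by \eqref{normalDerivativeCondition}) would separate the hemisphere into two pieces, each containing one arc, and so $\Omega_1$ would be disconnected.
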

{\bf Proof.} Let $x_0\in\mathcal{S}_2$, and let $p_1,p_2\in C_{x_0}$ be the two zeros of $\nabla u\cdot x_0$. By Proposition \ref{OnlyOneOmega} and the Hopf lemma, $\nabla u\cdot x_0$ has opposite signs on the two arcs of $C_{x_0}\setminus\{p_1,p_2\}$. Choose one point on each arc where these signs are strict. After identifying nearby great circles by a small rotation, continuity of $\nabla u$ implies that the corresponding two values of $\nabla u\cdot x$ retain opposite signs for $x$ sufficiently close to $x_0$. The intermediate value theorem then gives at least two zeros on $C_x$. By Lemma \ref{threePointLemma}, there can be at most two. Hence $x\in\mathcal{S}_2$ for all $x$ sufficiently close to $x_0$, and $\mathcal{S}_2$ is open. In view of \eqref{DisjointSets}, $\mathcal{S}_1=\mathbb{S}^2\setminus\mathcal{S}_2$, so $\mathcal{S}_1$ is closed. \hfill $\Box$

In this section, we prove that the vector field \( \mathbb{F} \), constructed in Section 2.1, is indeed continuous.

\begin{proposition}\label{continuityProp}
Suppose $\frac{1}{3} \leq \alpha \leq 1$. Let $u$ be a solution of \eqref{mainPDE} satisfying $(\mathcal{H})$ that is not symmetric about any plane passing through the origin, and $\mathbb{F}$ be the vector field defined in Section 2. The following statements hold. 

\begin{enumerate}[(i)]
 \item If $x_n \in \mathcal{S}_i$ converges to some $x_0 \in \mathcal{S}_i$ ($i=1,2$), then $\lim_{n\rightarrow \infty} \mathbb{F}(x_n)=\mathbb{F}(x_0)$. 
 \item If $x_n \in \mathcal{S}_2$ converges to some $x_0 \in \mathcal{S}_1$, then $\lim_{n\rightarrow \infty} \mathbb{F}(x_n)=\mathbb{F}(x_0)$. 
\end{enumerate}
Moreover, the vector field $\mathbb{F}$ is continuous on $\mathbb{S}^2$. 
\end{proposition}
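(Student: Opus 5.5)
The plan is to reduce continuity of $\mathbb{F}$ to continuity of the zeros of the function $g_x(p):=\nabla u(p)\cdot x$ on $C_x$ as $x$ varies. First I will parametrize nearby great circles uniformly. For $x$ near $x_0$, let $R_x$ be the rotation taking $x_0$ to $x$ about the axis $x_0\times x$. It depends continuously on $x$ and satisfies $R_{x_0}=\mathrm{id}$. Then $\theta\mapsto R_x\gamma_0(\theta)$ parametrizes $C_x$ positively, where $\gamma_0$ is a positively oriented parametrization of $C_{x_0}$. The functions $G_x(\theta):=\nabla u(R_x\gamma_0(\theta))\cdot x$ converge uniformly to $G_{x_0}$ as $x\to x_0$, since $u\in C^2$. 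By Lemma \ref{threePointLemma}, $G_x$ has at most two zeros for every $x$. By Proposition \ref{OnlyOneOmega} and the Hopf lemma, $G_x$ is strictly signed on each complementary arc (a zero of $G_x$ is exactly a point where two nodal domains meet on $C_x$).

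For (i) with $i=2$, let $p_1,p_2$ be the zeros of $G_{x_0}$, with sign change $+\to-$ at $p_1$ and $-\to+$ at $p_2$. Fix $\varepsilon>0$. Outside the $\varepsilon$-neighbourhoods of $p_1,p_2$, $|G_{x_0}|\ge\delta>0$. By uniform convergence, for $n$ large $G_{x_n}$ has the same strict signs there. So each $\varepsilon$-arc around $p_j$ contains a zero of $G_{x_n}$, and since there are at most two zeros, exactly one. The sign pattern forces the zero near $p_1$ to be the $+\to-$ crossing, so $p_1(x_n)\to p_1$ and $p_2(x_n)\to p_2$. Since $p_1\neq p_2$, the vector $(p_2-p_1)/|p_2-p_1|$ depends continuously on the pair. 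This vector is tangent at $x$, because both $p_i$ lie on $C_x\perp x$. Hence $\mathbb{F}(x_n)\to\mathbb{F}(x_0)$.

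For (i) with $i=1$, let $p$ be the unique zero and say $G_{x_0}\ge0$. Then $G_{x_0}\ge\delta$ away from an $\varepsilon$-arc around $p$. For $n$ large, $G_{x_n}>0$ there, so its unique zero $p(x_n)$ lies within $\varepsilon$ of $p$. Because $x_n\in\mathcal S_1$, $G_{x_n}$ does not change sign, so it is $\ge0$. The rule in Case II is therefore the same, and the counterclockwise unit tangent to $C_{x_n}$ at $p(x_n)$ converges to that at $p$.

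Part (ii) is the main obstacle. Take $x_n\in\mathcal S_2$ with zeros $p_1^n,p_2^n$ and $x_0\in\mathcal S_1$ with zero $p$ and, say, $G_{x_0}\ge0$. The same argument shows both $p_1^n$ and $p_2^n$ lie in the $\varepsilon$-arc around $p$, with $G_{x_n}>0$ outside that arc. Since $G_{x_n}$ is $+$ outside the small arc, it must be $-$ on the short arc between the two zeros. Travelling counterclockwise, we first meet $p_1^n$ ($+\to-$) and then $p_2^n$ ($-\to+$). Hence the chord $p_2^n-p_1^n$ points in the counterclockwise direction along $C_{x_n}$, up to an error that vanishes as the chord shrinks. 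Its normalization therefore converges to the counterclockwise unit tangent of $C_{x_0}$ at $p$, which is $\mathbb{F}(x_0)$. The case $G_{x_0}\le0$ is symmetric and gives the clockwise direction, matching Case II. The delicate point is that the normalized chord of a great circle between two points converging to $p$ tends to the tangent at $p$ with the correct sign. This follows from the expansion $\gamma(\theta_2)-\gamma(\theta_1)=(\theta_2-\theta_1)\gamma'(\theta_1)+O(|\theta_2-\theta_1|^2)$, combined with $\theta_1<\theta_2$ in the positive parametrization.

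Finally, continuity on $\mathbb{S}^2$ follows by combining the cases. Take any sequence $x_n\to x_0$ and split it into its $\mathcal S_1$ and $\mathcal S_2$ subsequences. Lemma \ref{BoundaryofSs} shows $\mathcal S_2$ is open, so if $x_0\in\mathcal S_2$ then eventually $x_n\in\mathcal S_2$ and (i) applies. If $x_0\in\mathcal S_1$, then (i) handles the $\mathcal S_1$ subsequence and (ii) handles the $\mathcal S_2$ subsequence. Both give the limit $\mathbb{F}(x_0)$, so $\mathbb{F}$ is continuous.
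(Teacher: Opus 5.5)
Your proof is correct and follows essentially the same route as the paper. Both arguments track the zeros of $\nabla u\cdot x$ on $C_x$ case by case: case (i) for $\mathcal S_2$ and for $\mathcal S_1$, and case (ii) for $\mathcal S_2\to\mathcal S_1$ via the sign-change labeling and the chord-to-tangent limit. Continuity is then assembled from the openness of $\mathcal S_2$ given by Lemma~\ref{BoundaryofSs}. Your rotation-based uniform parametrization with intermediate-value localization is a more careful version of the paper's compactness argument. In particular, it rules out both zeros collapsing to the same limit in case (i) with $i=2$, and it verifies the sign matching in case (i) with $i=1$, but it is not a different approach.
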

{\bf Proof.} $(i)(a)$ Suppose $x_n \in \mathcal{S}_2$ converges to some $x_0 \in \mathcal{S}_2$. Then for each $n \in \mathbb{N}$, there exist exactly two points $p_1^n$ and $p_2^n$ on $C_{x_n}$ such that $\nabla u(p_i^n)\cdot x_n=0$, and $p_1$ is the point where $\frac{\partial u}{\partial x_n} (p)$ changes sign from positive to negative as $p$ moves counterclockwise, and $p_2$ is the point where the sign changes from negative to positive with
\[\mathbb{F}(x_n)=\frac{p_2^n-p_1^n}{|p_2^n-p_1^n|}.\]
Since $x_0 \in \mathcal{S}_2$, there are also exactly two points $p_1$ and $p_2$ on $C_{x_0}$ such that $\nabla u(p_i)\cdot x_0=0$, and
\[\mathbb{F}(x_0)=\frac{p_2-p_1}{|p_2-p_1|}.\]
By compactness, every subsequence of $\{p_i^n\}$ has a further convergent subsequence, and continuity of $\nabla u$ shows that every such limit is a zero of $\nabla u\cdot x_0$ on $C_{x_0}$. The sign-change labeling of $p_1^n,p_2^n$ prevents the two limits from being interchanged. Hence $p_i^n\to p_i$, $i=1,2$, and therefore
\[\lim_{n\rightarrow \infty}\mathbb{F}(x_n)=\mathbb{F}(x_0).\]

\vspace{.3cm}
\noindent 
$(i)(b)$ Suppose \(x_n \in \mathcal{S}_1\) converges to some \(x_0 \in \mathcal{S}_1\). Then, for each \(n \in \mathbb{N}\), there exists exactly one point \(p^n\) on \(C_{x_n}\) such that \(\nabla u(p^n) \cdot x_n = 0\), and \(\mathbb{F}(x_n)\) is the unit tangent vector \(V_n\) to \(C_{x_n}\) if \(\nabla u \cdot x_n \geq 0\), and \(\mathbb{F}(x_n) = -V_n\) otherwise. Similarly, since \(x_0 \in \mathcal{S}_1\), there is exactly one point \(p\) on \(C_{x_0}\) such that \(\nabla u(p) \cdot x_0 = 0\), and \(\mathbb{F}(x_0)\) is the unit tangent vector \(V_0\) to \(C_{x_0}\) at \(p\) if \(\nabla u \cdot x_0 \geq 0 \), and \(\mathbb{F}(x_0) = -V_0\) otherwise. By compactness and continuity, every convergent subsequence of \(\{p^n\}\) has a limit at which \(\nabla u\cdot x_0=0\). Since \(x_0\in\mathcal{S}_1\), this limit can only be \(p\). Thus \(p^n\to p\), and therefore \(\lim_{n \to \infty} \mathbb{F}(x_n) = \mathbb{F}(x_0).\) 
\vspace{.3cm}

$(ii)$ Suppose \(x_n \in \mathcal{S}_2\) converges to some \(x_0 \in \mathcal{S}_1\). Similar to the case $(i)(a)$, for each $n \in \mathbb{N}$, there exist exactly two points $p_1^n$ and $p_2^n$ on $C_{x_n}$ such that $\nabla u(p_i^n)\cdot x_n=0$, and $p_1^n$ is the point where $\nabla u(p)\cdot x$ changes sign from positive to negative while $p$ moves in counterclockwise direction, and $p_2^n$ is the point where the sign changes from negative to positive with
 \[\mathbb{F}(x_n)=\frac{p_2^n-p_1^n}{|p_2^n-p_1^n|}.\] 
Since \(x_0 \in \mathcal{S}_1\), there is exactly one point \(p\) on \(C_{x_0}\) such that \(\nabla u(p) \cdot x_0 = 0\), and \(\mathbb{F}(x_0)\) is the unit tangent vector \(V_0\) to \(C_{x_0}\) at $p$ if \(\nabla u \cdot x_0 \geq 0 \), and \(\mathbb{F}(x_0) = -V_0\), otherwise. Both the sequences \(\{p_1^n\}\) and \(\{p_2^n\}\) converge to \(p\), as otherwise there would exist a second point \(p'\) on $C_{x_0}$ where \(\nabla u(p') \cdot x_0 = 0\), contradicting the assumption that \(x_0 \in \mathcal{S}_1\). Parameterize $C_{x_n}$ near $p$ by the positively oriented arc-length parameter. The labeling means that the sign of $\nabla u\cdot x_n$ on the oriented arc from $p_1^n$ to $p_2^n$ is opposite to its sign on the complementary arc. Since the limiting function $\nabla u\cdot x_0$ has no zero away from $p$, the first of these arcs must shrink to $p$ (after interchanging the two arcs if necessary according to the sign of the limiting function). Thus $p_1^n$ and $p_2^n$ approach $p$ from opposite sides, and the normalized chord $(p_2^n-p_1^n)/|p_2^n-p_1^n|$ converges to the unit tangent vector to $C_{x_0}$ at $p$, with its sign determined by the labeling of $p_1^n$ and $p_2^n$. Therefore,
\[
\lim_{n \rightarrow \infty} \mathbb{F}(x_n)=
\begin{cases}
V_0 & \text{if} \ \ \nabla u \cdot x_0 \geq 0 \ \ \text{on} \ \ C_{x_0},\\
-V_0 & \text{if} \ \ \nabla u \cdot x_0 \leq 0 \ \ \text{on} \ \ C_{x_0},
\end{cases}
\]
and hence \(\lim_{n \to \infty} \mathbb{F}(x_n) = \mathbb{F}(x_0).\)

In view of Lemma \ref{BoundaryofSs}, it follows from statements $(i)$ and $(ii)$ that the vector field $\mathbb{F}$ is continuous on $\mathbb{S}^2$. \hfill $\Box$
\vspace{.5cm}

\begin{proposition}\label{FirstPlaneProp}
Suppose $\frac{1}{3} \leq \alpha \leq 1$. Let $u$ be a solution of \eqref{mainPDE} satisfying $(\mathcal{H})$. Then $u$ is symmetric about a plane passing through the origin. 
\end{proposition}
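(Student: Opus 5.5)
We argue by contradiction and assume that $u$ is not symmetric about any plane through the origin. Under this assumption all the preceding machinery is in force. First, Lemma \ref{threePointLemma} shows that for every $x\in\mathbb{S}^2$ the function $\nabla u\cdot x$ has at most two zeros on $C_x$. Second, hypothesis $(\mathcal{H})$ gives at least one zero. Hence the decomposition \eqref{DisjointSets} $\mathbb{S}^2=\mathcal{S}_1\cup\mathcal{S}_2$ is valid, and the field $\mathbb{F}$ of Section 2.1 is defined at every point of $\mathbb{S}^2$.

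We then verify three properties of $\mathbb{F}$, which together contradict the Hairy Ball Theorem.

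First, $\mathbb{F}$ never vanishes. In Case I it is a normalized chord direction with $p_1\neq p_2$. In Case II it is a unit vector by definition.

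Second, $\mathbb{F}(x)$ is tangent to $\mathbb{S}^2$ at $x$. In Case I, $p_1,p_2\in C_x$ means $p_i\cdot x=0$, so $(p_2-p_1)\cdot x=0$. In Case II, the unit tangent to $C_x$ at $p$ is $\pm x\times p$, which is orthogonal to $x$. A point to note in Case II: by Proposition \ref{OnlyOneOmega} and the Hopf lemma, $\nabla u\cdot x$ cannot change sign on $C_x$, since a sign change would force a second zero. Thus exactly one of the alternatives $\nabla u\cdot x\ge0$ or $\nabla u\cdot x\le 0$ holds, and the choice of orientation is unambiguous. It also cannot vanish identically, since $p$ is the only zero.

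Third, $\mathbb{F}$ is continuous on $\mathbb{S}^2$. This is exactly Proposition \ref{continuityProp}. It covers convergence within $\mathcal{S}_1$, convergence within $\mathcal{S}_2$, and convergence from $\mathcal{S}_2$ to $\mathcal{S}_1$. By Lemma \ref{BoundaryofSs}, $\mathcal{S}_2$ is open, so a sequence in $\mathcal{S}_1$ cannot converge to a point of $\mathcal{S}_2$. Hence these cases exhaust all convergent sequences, after splitting an arbitrary sequence into its $\mathcal{S}_1$ and $\mathcal{S}_2$ subsequences.

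Therefore $\mathbb{F}$ is a continuous, nowhere-vanishing tangent vector field on $\mathbb{S}^2$. This contradicts the Hairy Ball Theorem, so $u$ must be symmetric about some plane $P_{x_0}$ through the origin.

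The only delicate point is that the definition of $\mathbb{F}$ is unambiguous and that the case analysis in the continuity proof is exhaustive. Both rest on the earlier results: the at-most-two-zeros property from Lemma \ref{threePointLemma}, together with the sign structure from Proposition \ref{OnlyOneOmega}. We therefore do not expect a genuine obstacle beyond citing these results carefully.
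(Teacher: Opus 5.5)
Your proposal is correct and follows the paper's argument: assume no symmetry plane, use Proposition \ref{continuityProp} to get continuity of the nowhere-vanishing tangent field $\mathbb{F}$, and contradict the Hairy Ball Theorem, with the nonvanishing, tangency and case-exhaustiveness checks (which the paper leaves implicit) written out. One small remark: the constant sign in Case II needs only that $C_x\setminus\{p\}$ is connected, whereas Proposition \ref{OnlyOneOmega} and the Hopf lemma are really needed in Case I, to ensure that $\nabla u\cdot x$ changes sign at both zeros so that the labeling of $p_1,p_2$ is well defined.
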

{\bf Proof.} Suppose \( u \) is not symmetric about any plane. Then, by Proposition \ref{continuityProp}, the non-vanishing tangent vector field \( \mathbb{F} \) defined in Subsection 2.1 is continuous on \( \mathbb{S}^2 \), which contradicts the Hairy Ball Theorem. Therefore, \( u \) must be symmetric about a plane passing through the origin. \hfill $\Box$

\section{Symmetry about a second plane}
In this section we prove that, for \( \frac{1}{3} \leq \alpha \leq 1 \), if a solution of the equation \eqref{mainPDE} is symmetric with respect to a plane passing through the origin, then it must be symmetric with respect to two orthogonal planes.

\begin{lemma} \label{secondPlaneLemma2} Let \( \frac{1}{3} \leq \alpha \leq 1 \), and let \( u \) be the solution of equation \eqref{mainPDE}. If \( u \) has two antipodal critical points $\pm x_c$, then it is symmetric with respect to two orthogonal planes passing through the origin and $\pm x_c$. 
\end{lemma}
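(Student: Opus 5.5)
Rotate coordinates so that $\pm x_c=\pm N$ with $N=(0,0,1)$. Every plane through the origin that contains $\pm x_c$ then has a normal $x_0$ in the equatorial circle $E=\{x:x\cdot N=0\}$, and its great circle $C_{x_0}$ passes through both $N$ and $-N$. Since $\nabla u(\pm N)=0$, we get $\nabla u(\pm N)\cdot x_0=0$ for every $x_0\in E$. So each such great circle already carries two zeros of the normal derivative, and I will try to produce a third.

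\textbf{Step 1: a one-parameter family of signed functions.} For $x_0\in E$ define $g(x_0)=\int_{C_{x_0}}\nabla u\cdot x_0\,ds$, or use the signs of $\nabla u\cdot x_0$ on the two open semicircles of $C_{x_0}\setminus\{N,-N\}$. Replacing $x_0$ by $-x_0$ gives the same circle with the reflected normal, so $\nabla u\cdot(-x_0)=-\nabla u\cdot x_0$ pointwise. Fix one semicircle $\gamma(x_0)$, running from $N$ to $-N$ and making angle $\theta$ in the equatorial parametrization $x_0=x_0(\theta)$. Define
\[
h(\theta)=\int_{\gamma(x_0(\theta))}\nabla u\cdot x_0(\theta)\,ds .
\]
As $\theta$ increases by $\pi$, the semicircle $\gamma$ becomes the complementary semicircle of the same great circle and the normal flips. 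The parametrization should therefore be chosen so that $h(\theta+\pi)$ is related to $h(\theta)$ by a sign change, and $h$ is continuous. Then $h$ vanishes at some $\theta_0$, by the intermediate value theorem.

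\textbf{Step 2: a third zero.} If $h(\theta_0)=0$, then either $\nabla u\cdot x_0$ vanishes identically on that semicircle, or it changes sign in its interior. In both cases there is a point $p\neq\pm N$ on $C_{x_0}$ with $\nabla u(p)\cdot x_0=0$. Together with $\pm N$ this gives three distinct points, so Lemma \ref{threePointLemma} yields symmetry of $u$ about $P_{x_0}$, and this plane contains $\pm x_c$.

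\textbf{Step 3: the orthogonal plane.} Let $x_1\in E$ be orthogonal to $x_0$, so that $P_{x_1}$ also contains $\pm N$. Reflection about $P_{x_0}$ is a symmetry of $u$ and maps $C_{x_1}$ to itself. Hence the function $q\mapsto\nabla u(q)\cdot x_1$ on $C_{x_1}$ is invariant under that reflection, while its orientation along the circle is reversed. Under this reflection the two semicircles of $C_{x_1}\setminus\{\pm N\}$ are each mapped to themselves. The two intersection points of $C_{x_1}$ with $P_{x_0}$ are $\pm N$, so I must look instead at the points $\pm x_0\in C_{x_1}$. These are exchanged by the reflection about $P_{x_0}$, since $x_0\mapsto -x_0$. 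Consequently $\nabla u(x_0)\cdot x_1=\nabla u(-x_0)\cdot x_1$.

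This gives only a relation between the two values, not a zero. The cleaner route is to rerun Step 1 restricted to the equator. The family $\theta\mapsto x_0(\theta)\in E$ yields at least two distinct zeros $\theta_0,\theta_0'$ of $h$ modulo $\pi$. The reason is that the symmetry about $P_{x_0}$ makes $h$ odd about $\theta_0$, so a sign change of $h$ across $\theta_0$ forces another zero at $\theta_0+\pi/2$ after a suitable reparametrization. Applying Step 2 again at $\theta_0+\pi/2$ gives symmetry about the orthogonal plane.

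\textbf{Main obstacle.} The main obstacle is the parity bookkeeping in Step 1, namely getting $h(\theta+\pi)=-h(\theta)$ with the right orientation. The same issue arises in Step 3, in showing the second zero sits exactly at $\theta_0+\pi/2$ rather than merely somewhere else. For Step 3 I would first try to use the symmetry: reflection about $P_{x_0}$ conjugates the family $x_0(\theta_0+t)$ to $x_0(\theta_0-t)$, giving $h(\theta_0+t)=\pm h(\theta_0-t)$. The value at $t=\pi/2$ is then pinned to zero by combining this with the antiperiodicity $h(\theta+\pi)=-h(\theta)$.
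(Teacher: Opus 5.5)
\textbf{Step 3 has a genuine gap.} Granting symmetry about $P_{x_0}$, your $h$ is odd about $\theta_0$. Combining this with $h(\theta+\pi)=-h(\theta)$ (even if that held) gives only $h(\theta_0+\frac{\pi}{2}+s)=h(\theta_0+\frac{\pi}{2}-s)$, i.e.\ evenness about $\theta_0+\frac{\pi}{2}$, which forces no zero. For instance, $h(\theta)=\sin(\theta-\theta_0)$ satisfies every relation you have and equals $1$ at $\theta_0+\frac{\pi}{2}$.

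More fundamentally, on $C_{x_1}$ the function $\nabla u\cdot x_1$ is merely invariant under $R_{x_0}$. That reflection swaps, rather than preserves, the two semicircles of $C_{x_1}\setminus\{\pm N\}$. Everything at your disposal is consistent with $\nabla u\cdot x_1>0$ on $C_{x_1}\setminus\{\pm N\}$, so no parity or intermediate-value argument can produce a third zero there.

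The missing idea is second-order information at the critical point. The paper reflects across a principal direction of the Hessian at $x_c$ (after stereographic projection, an axis diagonalizing the Hessian of $w$ at $0$). Then the difference $\varphi$ between $u$ and its reflection vanishes to order $m\ge 3$ at $x_c$: the linear terms vanish because $x_c$ is critical, and the mixed quadratic term vanishes by the choice of axis. The leading term is a harmonic homogeneous polynomial, odd under the reflection, so $m-1\ge 2$ nodal arcs enter each open hemisphere from $x_c$. At least one more arc enters from $-x_c$, where $\varphi$ vanishes to order at least $2$ since $-x_c$ is also critical. This gives at least three nodal domains per hemisphere. The Sphere Covering Inequality then yields $\frac{8\pi}{\alpha}\ge 24\pi$, which is impossible for $\alpha>\frac13$ and excluded at $\alpha=\frac13$ by the equality case. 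Doing this for both principal directions gives both orthogonal planes at once, with no need for your Steps 1--2. In your framework, once $u$ is symmetric about $P_{x_0}$, the Hessian at $N$ is automatically diagonal in the basis $\{x_0,x_1\}$, so this argument is exactly what would complete Step 3.

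\textbf{Step 1 contains a false claim, though its conclusion can be rescued.} With the semicircle rotating with $\theta$, $h(\theta+\pi)$ is minus the integral of $\nabla u\cdot x_0(\theta)$ over the complementary semicircle. Hence $h(\theta)-h(\theta+\pi)=\int_{C_{x_0(\theta)}}\nabla u\cdot x_0(\theta)\,ds$. By the computation in the Remark following Theorem \ref{mainTheorem}, this measures the hemispherical imbalance of $e^u$ and need not vanish, so antiperiodicity fails in general.

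A zero of $h$ still exists. In polar coordinates $(\phi,\psi)$ about $N$, $\nabla u\cdot x_0(\theta)$ equals, up to a fixed sign, $\frac{1}{\sin\phi}\partial_\psi u$ on the meridian forming the semicircle. Integrating in $\psi$ gives $\int_0^{2\pi}h(\theta)\,d\theta=0$, so the continuous function $h$ vanishes somewhere. With that fix, Steps 1--2 correctly produce one symmetry plane through $\pm x_c$ via Lemma \ref{threePointLemma}, but not the second.
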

{\bf Proof.} We may assume that \((0,0,\pm1)\) are the critical points. Using the stereographic projection and an argument similar to that in the proof of Lemma \ref{threePointLemma}, we obtain a function \( w: \mathbb{R}^2 \rightarrow \mathbb{R} \) satisfying the equation \eqref{w}. Furthermore, the origin \( (0,0) \) is a critical point of \( w \). Near the origin, \( w(y_1,y_2) \) can be expressed as
\[
w(y_1,y_2) = \alpha_{0,0} + \alpha_{2,0}y_1^2 + \alpha_{0,2}y_2^2 + \text{higher-order terms}
\]
in a suitable orthogonal basis of $\mathbb{R}^2$. 

Now let $\tilde{y}$ be the reflection of $y$ with respect to $y_1$-axis. Then $\varphi(y)=w(y)-w(\tilde{y})$ solves the linear elliptic equation \eqref{linearElliptic}, and 
\[\varphi(y)=P_{m}+\text{higher-order terms}, \ \ \text{for some} \ \ m\geq 3,\]
where $P_m$ is a harmonic homogeneous polynomial. Hence the nodal line of $\varphi$ divides a neighborhood of the origin into at least $6$ regions. Since $(0,0,-1)$ is also a critical point of $u$, the nodal line of $\varphi$ divides a neighborhood of infinity into at least $4$ regions, and therefore globally it divides $\mathbb{R}^2$ into at least 6 regions. Thus, an argument similar to that in the proof of Lemma \ref{threePointLemma} gives
\[
\frac{8\pi}{\alpha}\geq24\pi.
\]
This is impossible for $\alpha>\frac13$; when $\alpha=\frac13$, equality is excluded by the equality statement in the Sphere Covering Inequality because the right-hand side of \eqref{w} is strictly positive. Thus $w$ is symmetric with respect to the $y_1$-axis. By replacing $y_1$ axis with $y_2$ axis in the above argument, we conclude that $w$ is also symmetric with respect to the $y_2$-axis. The proof is now complete. \hfill $\Box$

\begin{proposition}
Let \( \frac{1}{3} \leq \alpha \leq 1 \), and let \( u \) be a solution of equation \eqref{mainPDE} satisfying $(\mathcal{H})$. 
If \( u \) is symmetric with respect to a plane passing through the origin, then it must be symmetric with respect to two orthogonal planes passing through the origin.
\end{proposition}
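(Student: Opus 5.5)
The plan is to reduce the statement to Lemma \ref{secondPlaneLemma2}: it suffices to find an antipodal pair of critical points $\pm x_c$ of $u$ lying in the plane of symmetry. After a rotation, assume $u$ is symmetric about $P_{e_3}$, i.e. $u(x_1,x_2,x_3)=u(x_1,x_2,-x_3)$. Then along the equator $C_{e_3}$ we have $\nabla u\cdot e_3=0$ identically, so $\nabla u$ restricted to $C_{e_3}$ is tangent to $C_{e_3}$. In other words, the restriction $g(\theta):=u(\cos\theta,\sin\theta,0)$ captures the full gradient on the equator: a point of $C_{e_3}$ is a critical point of $u$ exactly when $g'(\theta)=0$.

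Next I would use hypothesis $(\mathcal{H})$ with $y$ ranging over the equator, $y=(\cos\beta,\sin\beta,0)$. For such $y$, the great circle $C_y$ passes through the poles $\pm e_3$ and meets the equator at $\pm y^\perp$, where $y^\perp=(-\sin\beta,\cos\beta,0)$. The goal is to show that the odd part $h(\theta):=g(\theta)-g(\theta+\pi)$ has a zero derivative somewhere, or better that $g'$ vanishes at antipodal points.

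The natural tool is a Lemma \ref{threePointLemma}-style count for the reflection $R_y$ in the vertical plane $P_y$. The function $\varphi=u-u\circ R_y$ vanishes on $C_y$. By the $e_3$-symmetry, $\varphi$ is also even in $x_3$. Hence if the zero of $\nabla u\cdot y$ guaranteed by $(\mathcal{H})$ lies at a point $p\neq\pm y^\perp$ of $C_y$, then $\bar p=R_{e_3}p$ is a second such zero. If $u$ is not symmetric about $P_y$, Lemma \ref{threePointLemma} allows at most two zeros. If in addition the zero is off the equator, Proposition \ref{OnlyOneOmega} gives exactly four nodal regions, with boundary points $p,\bar p$ on $C_y$. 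From this I would try to show that the nodal domains in each hemisphere are separated by a curve passing from $p$ to $\bar p$ through the equator. That forces $\varphi$ to vanish at some equatorial point, i.e. $g(\theta_0)=g(2\beta+\pi-\theta_0)$, which is information about $g$ but not directly a critical point.

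For that reason I would rather use a degree argument on $g$. Consider $\beta\mapsto$ the zeros of $\nabla u\cdot y$ on $C_y$. At the poles, $\nabla u(\pm e_3)$ is horizontal by the symmetry, so if $\nabla u(e_3)\cdot y=0$ for some $y$ we have a zero at a pole. Since $\nabla u(e_3)$ is a horizontal vector, there is always a $y$ orthogonal to it, so the poles supply zeros only for one direction, unless $e_3$ is itself critical. If $\pm e_3$ are both critical, Lemma \ref{secondPlaneLemma2} gives symmetry about two orthogonal planes through $\pm e_3$. Combined with the first plane, this should yield the conclusion; one of those planes may be chosen orthogonal to $P_{e_3}$ in the proof of Lemma \ref{secondPlaneLemma2}, since there the planes are in the Hessian eigenbasis.

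Otherwise, I would exploit $(\mathcal{H})$ on all the $C_y$ to produce a point $y^\perp$ with $\nabla u(y^\perp)\cdot y=0$, i.e. $g'=0$ there. The critical points of $g$ are automatically critical points of $u$. Then I would need an antipodal pair, so I would apply the same Hairy-Ball/continuity construction of Section 2 restricted to the circle family to force $g'(\theta_0)=g'(\theta_0+\pi)=0$. Finally, Lemma \ref{secondPlaneLemma2} at $x_c=(\cos\theta_0,\sin\theta_0,0)$ gives two orthogonal symmetry planes through $\pm x_c$, and intersecting with $P_{e_3}$ gives the statement.

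The main obstacle is this last step: guaranteeing an antipodal pair of critical points rather than just one. I expect to handle it by the sign-change/labeling argument of Proposition \ref{continuityProp}, adapted to a one-parameter family of great circles. There the Hairy Ball contradiction is replaced by a degree-parity argument on $\mathbb{S}^1$, namely that an odd continuous map cannot avoid the required zero.
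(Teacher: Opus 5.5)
\textbf{There is a genuine gap.} Your reduction is correct, and so is your handling of the case where $\pm e_3$ are critical. By Lemma~\ref{secondPlaneLemma2}, an antipodal pair of critical points gives two orthogonal symmetry planes. Conversely, if $P_a$ and $P_b$ are orthogonal symmetry planes, the two points where $P_a\cap P_b$ meets $\mathbb{S}^2$ are antipodal critical points. So the reduction restates the proposition rather than simplifying it.

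All of the content therefore lies in the step you leave open: producing $\theta_0$ with $g'(\theta_0)=g'(\theta_0+\pi)=0$. The tool you propose for it cannot work.

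\begin{itemize}
\item No parity or degree argument on $\mathbb{S}^1$ can force this. For $g(\theta)=\cos\theta+\varepsilon\sin2\theta$ one has $g'(\theta)=-\sin\theta+2\varepsilon\cos2\theta$ and $g'(\theta+\pi)=\sin\theta+2\varepsilon\cos2\theta$. These never vanish together when $\varepsilon\neq0$. The odd functions you can build from $g$, such as $g'(\theta)-g'(\theta+\pi)$, only give $g'(\theta_0)=g'(\theta_0+\pi)$.
\item Propositions~\ref{OnlyOneOmega} and~\ref{continuityProp} do not transplant. You invoke the former for the four nodal regions, but both assume $u$ is symmetric about \emph{no} plane, which fails here.
\item The field $\mathbb{F}$ is undefined at $\pm e_3$, where $\nabla u\cdot e_3\equiv 0$ on $C_{e_3}$. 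So there is no Hairy Ball contradiction.
\item $\mathbb{F}$ is even: $\mathbb{F}(-y)=\mathbb{F}(y)$ in both Case I and Case II. For horizontal $y$, the off-equator zeros of $\nabla u\cdot y$ come in pairs reflected across $P_{e_3}$, so the Case I chord is just $\pm e_3$. No odd map arises from it.
\item Your use of $(\mathcal{H})$ to find a zero of $g'$ is vacuous, since the extrema of $g$ already supply one.
\end{itemize}

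\textbf{The missing idea} is local and uses the equation through the Sphere Covering Inequality, with $(\mathcal{H})$ supplying the contradiction. The paper proceeds as follows.

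\begin{enumerate}
\item Take a minimum point $x^*$ of $u|_E$. It is a critical point of $u$.
\item Show $x^*$ is nondegenerate. Let $y^*$ be tangent to $E$ at $x^*$. The parity relations force $u-u\circ R_{y^*}$ to vanish to odd order at $x^*$, and degeneracy kills the cubic term, so the order would be at least $5$. That gives three nodal domains in a hemisphere, and the Sphere Covering Inequality then makes $P_{y^*}$ a symmetry plane.
\item If $-x^*$ were critical, Lemma~\ref{secondPlaneLemma2} would finish. So assume it is not.
\item Consider the vertical great circles $C_y$ through equatorial points $x$ near $x^*$, with $y$ chosen continuously. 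The sign of $\nabla u\cdot y$ at $x$ flips as $x$ crosses $x^*$, while its sign at $-x$ does not.
\item On one side of $x^*$, $(\mathcal{H})$ is met by a pair of zeros reflected across $P_{e_3}$, which collapse into $x^*$. On the other side, the nodal structure at $x^*$ together with Lemma~\ref{threePointLemma} excludes every zero on $C_y$. Then $\nabla u\cdot y$ has a strict sign on $C_y$, contradicting $(\mathcal{H})$.
\end{enumerate}

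It is this contradiction with $(\mathcal{H})$, not the construction of an antipodal pair of critical points, that closes the argument.
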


\noindent 
{\bf Proof.}
Let
\[
E:=\mathbb{S}^2\cap\{x_3=0\},
\]
and, without loss of generality, assume that \(u\) is symmetric with respect to the \(x_1x_2\)-plane. We prove that \(u\) is symmetric with respect to a plane orthogonal to it. Suppose, to the contrary, that no such symmetry plane exists.

Let \(x^*\) be a minimum point of \(u|_E\). By the assumed symmetry, the derivative of \(u\) normal to \(E\) vanishes at \(x^*\), while the tangential derivative vanishes because \(x^*\) is a minimum. Hence \(x^*\) is a critical point of \(u\).

We first claim that \(x^*\) is a nondegenerate minimum of \(u|_E\). Let \(y^*\) be a unit tangent vector to \(E\) at \(x^*\), let \(P_{y^*}\) be the plane through the origin with normal \(y^*\), and let \(R_{y^*}\) denote reflection with respect to \(P_{y^*}\). Consider
\[
\varphi(z):=u(z)-u(R_{y^*}z).
\]
If \(\varphi\equiv0\), then \(P_{y^*}\) is the desired second symmetry plane. Thus assume \(\varphi\not\equiv0\). Choose local coordinates \((x_1,x_2)\) centered at \(x^*\), with the \(x_1\)-axis tangent to \(E\) and the \(x_2\)-axis orthogonal to \(E\) in \(\mathbb{S}^2\). Then
\[
\varphi(-x_1,x_2)=-\varphi(x_1,x_2),
\qquad
\varphi(x_1,-x_2)=\varphi(x_1,x_2).
\]
Since \(x^*\) is a critical point, the vanishing order \(m\) of \(\varphi\) is at least \(3\), and its leading term is a nonzero harmonic homogeneous polynomial \(P_m\). The two parity relations imply that \(m\) is odd. If \(m=3\), then
\[
P_3(x_1,x_2)=c(x_1^3-3x_1x_2^2),\qquad c\neq0.
\]
However, if the minimum of \(u|_E\) were degenerate, its Taylor expansion at \(x^*\) would have neither a quadratic nor a cubic term; hence the restriction of \(\varphi\) to \(x_2=0\) could not have a nonzero cubic term. This excludes \(m=3\), so \(m\geq5\). Hence at least four nodal arcs enter each of the two hemispheres determined by \(P_{y^*}\), and the nodal graph yields at least three nodal domains in a hemisphere. The Sphere Covering Inequality argument used in Lemma~\ref{threePointLemma} then forces symmetry with respect to \(P_{y^*}\), a contradiction. Therefore \(x^*\) is nondegenerate.

Without loss of generality, we may assume that \( x^* = (1,0,0) \). Let \( x = (x_1,x_2,0) \) be close to \( x^* \) with \( x_2 < 0 \). Then
\[
\nabla u(x) \cdot y < 0,
\]
where \( y \) is the unit normal vector to the plane \(P_y\) passing through the origin and \(x\), orthogonal to the \(x_1x_2\)-plane, and oriented toward the half-space containing \(x^*\). Notice that \(-x^*\) cannot also be a critical point; otherwise Lemma~\ref{secondPlaneLemma2}, together with the assumed symmetry with respect to the \(x_1x_2\)-plane, would give the desired second symmetry plane. Hence, for \(x\) sufficiently close to \(x^*\), neither \(x\) nor \(-x\) is a zero of \(\nabla u(\,\cdot\,)\cdot y\) on \(C_y\). Therefore, by \((\mathcal{H})\) and the symmetry with respect to the \(x_1x_2\)-plane, there exist two distinct points \(p_x^1,p_x^2\in C_y\), reflected images of one another across the \(x_1x_2\)-plane, such that
\[
\nabla u(p_x^i)\cdot y=0,\qquad i=1,2.
\]

If there exists a third point \(p\in C_y\) such that
\[
\nabla u(p)\cdot y=0,
\]
then Lemma~\ref{threePointLemma} implies that \(u\) is symmetric with respect to \(P_y\), and the proof is complete.

We now consider the nodal set of
\[
\varphi_x(z):=u(z)-u(\widetilde z),
\]
where \(\widetilde z\) denotes the reflection of \(z\) with respect to the plane \(P_y\). There exists a domain \(\Omega_x\subset H_y\) such that \(\varphi_x<0\) in \(\Omega_x\), \(\varphi_x>0\) in \(H_y\setminus\Omega_x\), and \(\partial\Omega_x\) contains an arc with endpoints \(p_x^1\) and \(p_x^2\) passing through \(x\).

Since the nodal line of \(\varphi_{x^*}\) divides a neighborhood of \(x^*\) into at least six regions, it follows that
\[
p_x^i\to x^* \quad \text{and} \quad x\to x^*,\qquad i=1,2.
\]
If this nodal line touches \(C_y\) at any point other than \(x^*\), then it divides \(H_y\) into at least three regions, and Lemma~\ref{threePointLemma} again implies that \(u\) is symmetric with respect to \(P_y\).

On the other hand, for \(x=(x_1,x_2,0)\) with \(x_2>0\), we have
\[
\nabla u(x)\cdot y>0,
\]
where \(y\) is the unit normal vector to the plane \(P_y\) passing through the origin and \(x\), orthogonal to the \(x_1x_2\)-plane, and oriented toward the half-space not containing \(x^*\).  The preceding nodal-set argument shows that, after the two boundary zeros \(p_x^1,p_x^2\) coalesce at \(x^*\), no zero of \(\nabla u(\,\cdot\,)\cdot y\) can remain on \(C_y\) for \(x_2>0\) sufficiently small; otherwise, in view of hypothesis $(\mathcal{H})$,  the nodal set would acquire an additional intersection with \(C_y\), and Lemma~\ref{threePointLemma} would again imply symmetry with respect to \(P_y\). Since \(\nabla u(x)\cdot y>0\), it follows that
\[
\nabla u(z)\cdot y>0 \qquad \text{for all } z\in C_y.
\]
Hence \(\Omega_x\Subset H_y\), which contradicts assumption \((\mathcal{H})\). Thus \(u\) must be symmetric with respect to a plane orthogonal to the \(x_1x_2\)-plane passing through the origin.
\hfill $\Box$

\section{Axial symmetry} 
In this section we present the proof of Theorem \ref{mainTheorem} and show that for $\frac{1}{3}\leq \alpha \leq 1$, every solution of the mean field equation \eqref{mainPDE} satisfying $(\mathcal{H})$ must be axially symmetric. 

\begin{proposition}
Let \( \frac{1}{3} \leq \alpha \leq 1 \), and let \( u \) be a solution of Equation \eqref{mainPDE} satisfying \( (\mathcal{H}) \). If \( u \) is symmetric with respect to two orthogonal planes passing through the origin, then it must be axially symmetric. Moreover, if \( \alpha = \frac{1}{3} \), then \( u \equiv 0 \).
\end{proposition}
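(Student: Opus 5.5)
Assume, after a rotation, that \(u\) is symmetric with respect to the planes \(\{x_1=0\}\) and \(\{x_2=0\}\). Then the poles \(\pm e_3=(0,0,\pm1)\) lie on both symmetry planes. Consequently both horizontal derivatives of \(u\) vanish at \(\pm e_3\), and so does the tangential gradient. Hence \(\pm e_3\) are antipodal critical points of \(u\).

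The plan is to show that \(u\) is symmetric with respect to \emph{every} plane containing the \(x_3\)-axis, which is exactly axial symmetry about that axis. Fix a unit vector \(y\perp e_3\) and set \(\varphi_y(z)=u(z)-u(R_yz)\). We pass to the stereographic picture of Lemma~\ref{threePointLemma}, projecting from \(e_3\). Then \(-e_3\) goes to the origin, \(e_3\) goes to infinity, and \(C_y\) becomes a line \(L\) through the origin. Writing \(\varphi(y')=w(y')-w(\tilde y')\) with \(\tilde y'\) the reflection across \(L\), the function \(\varphi\) is odd under this reflection and solves \eqref{linearElliptic}. Because the origin is a critical point of \(w\), \(\varphi\) vanishes to order at least \(2\) there, and likewise at infinity.

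The strategy mirrors Lemma~\ref{secondPlaneLemma2}, but we use the extra \(\mathbb{Z}_2\times\mathbb{Z}_2\) symmetry to raise the vanishing order. First expand \(w\) at the origin. The two reflection symmetries \(y_1\mapsto -y_1\) and \(y_2\mapsto -y_2\) kill every monomial with an odd power of \(y_1\) or of \(y_2\). So \(w=\alpha_{00}+\alpha_{20}y_1^2+\alpha_{02}y_2^2+O(|y|^4)\), and the quadratic part is already diagonal in the symmetry axes. Now take \(L\) through the origin at angle \(\theta\notin\frac{\pi}{2}\mathbb{Z}\). If \(\alpha_{20}=\alpha_{02}\), the quadratic part is radial and cancels in \(\varphi\); the next terms are even of degree \(4\), so \(\varphi\) vanishes to order \(m\ge 4\) at the origin and its nodal set makes at least \(8\) sectors there. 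If \(\alpha_{20}\neq\alpha_{02}\), we use a second candidate line. The parity constraints under both reflections force the cubic harmonic term to vanish, so the leading term of \(\varphi\) is degree \(2\) and of the form \(c\,\mathrm{Im}\big((e^{-i\theta}y)^2\big)\), as for the \(x_1x_2\)-plane case. That term splits off the axis of \(L\) plus a second perpendicular nodal line through the origin. By the symmetric counting at infinity, the same local picture holds at \(e_3\).

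The main step is to count nodal domains globally and invoke the Sphere Covering Inequality. At each of the two poles, the nodal set of \(\varphi\) contains \(L\) and at least one more curve crossing it transversally, or otherwise \(m\ge3\). Let \(\Gamma\) be the arc of that extra nodal curve in the open half-plane \(H_y\) emanating from the origin. We follow \(\Gamma\), together with the corresponding arc emanating from infinity. Suppose these two arcs do not coincide as one arc from \(0\) to \(\infty\). Then, using that \(\varphi\) has no interior zeros of odd sign-change-free type, these arcs cut \(H_y\) into at least three nodal domains. Applying Theorem~\ref{SCI} on each of them and on their reflections gives \(\int e^w\ge 24\pi>8\pi/\alpha\) for \(\alpha>\frac13\); for \(\alpha=\frac13\) the equality case is excluded as before. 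This forces \(\varphi\equiv0\).

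The delicate case is when a single nodal arc joins \(0\) to \(\infty\) inside \(H_y\), giving exactly two domains per half. Here I would use hypothesis \((\mathcal H)\) together with the symmetry. Under the reflection through the line \(L^\perp\), which is a composition of the two symmetries of \(u\), \(\varphi\) changes sign and \(H_y\) is mapped to itself. This forces the unique arc to be invariant under that reflection, so it is the half-line \(L^\perp\cap H_y\). Then \(\varphi\) also vanishes on \(L^\perp\). By the degree-\(2\) leading term and unique continuation, \(\varphi\) has the sign pattern of \(\sin 2\vartheta\) in the quadrants determined by \(L\) and \(L^\perp\). I would then vary \(\theta\) continuously and compare with \(\theta=0\), where \(\varphi\equiv0\). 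Differentiating \(\varphi\) in \(\theta\) at \(\theta=0\) gives \(\partial_\vartheta u\), the angular derivative, which solves the linearized equation \(\frac\alpha2\Delta\psi+e^u\psi=0\). Expected obstacle: showing \(\partial_\vartheta u\equiv0\). I would try to conclude this from the fact that its nodal set contains the four half-lines at angles \(0,\frac\pi2,\pi,\frac{3\pi}2\) together with the odd parity, which give at least four nodal domains. I would then rule these out via the Sphere Covering Inequality for \(\alpha\ge\frac13\), as in the nondegeneracy argument of the preceding proposition. Once axial symmetry holds, the case \(\alpha=\frac13\) follows from the axially symmetric results of \cite{GW-MR1760786, Lin1-MR1770683}, or from Theorem~\ref{Theorem1}, since an axially symmetric solution with the two reflection symmetries is even.
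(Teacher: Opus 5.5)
\textbf{Gap: your plan targets a statement that is false in general.} You try to prove symmetry with respect to \emph{every} plane containing the $x_3$-axis, i.e.\ axial symmetry about the intersection line of the two given planes. The proposition only asserts axial symmetry about \emph{some} axis, and that axis can be $e_1$ or $e_2$.

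For $\alpha$ slightly above $\frac13$, Crandall--Rabinowitz bifurcation from $u=0$ at $\alpha=\frac13$ should give a counterexample. Work in the class of even functions of $x_1$ alone: the kernel is spanned by $P_2(x_1)$ with $P_2(t)=\frac{3t^2-1}{2}$, and the branch is transcritical since $\int_{-1}^1P_2^3\,dt\neq0$. This yields nonconstant solutions $u=g(x_1)$ with $g$ even. Such a $u$ is symmetric about $\{x_1=0\}$ and $\{x_2=0\}$, and it is even, hence satisfies $(\mathcal H)$. But it is not symmetric about a generic plane through the $x_3$-axis.

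This is exactly where your argument stalls. Let $y^\perp$ be the horizontal unit vector orthogonal to $y$, and $\rho$ the rotation by $\pi$ about the $x_3$-axis. Then $u\circ\rho=u$ and $R_{y^\perp}=R_y\rho$, so $\varphi_y$ vanishes identically on $C_y\cup C_{y^\perp}$ for every horizontal $y$.
\begin{itemize}
\item The four-lune configuration is therefore always present, and your ``delicate case'' is the generic one.
\item Four nodal domains give, via Theorem~\ref{SCI}, only $\int e^w\,dy\ge16\pi$. This is compatible with $8\pi/\alpha$ for every $\alpha\le\frac12$, so there is no contradiction.
\item Your fallback through $\partial_\vartheta u$ has the same four-lune nodal set and the same count. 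In the example above, $\partial_\vartheta u=-x_2g'(x_1)\not\equiv0$.
\item Hypothesis $(\mathcal H)$ never actually enters your argument.
\end{itemize}
Your last step is also wrong. An axially symmetric $u=f(x_3)$ is symmetric about both vertical planes without being even. Moreover, \cite{GW-MR1760786, Lin1-MR1770683} treat $\alpha\ge\frac12$, not $\alpha=\frac13$.

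\textbf{Missing idea.} Apply $(\mathcal H)$ with $y=e_3$. This gives $p^*=(x_1^*,x_2^*,0)$ with $\nabla u(p^*)\cdot e_3=0$, and the two reflections turn it into the points $(\pm x_1^*,\pm x_2^*,0)$.
\begin{itemize}
\item If $x_1^*x_2^*\neq0$, these are four distinct zeros of $\nabla u\cdot e_3$ on $C_{e_3}$, and Lemma~\ref{threePointLemma} gives symmetry about $\{x_3=0\}$.
\item Otherwise $\pm p^*$ are antipodal critical points. Lemma~\ref{secondPlaneLemma2} then gives two orthogonal symmetry planes through $\pm p^*$. Together with the given plane orthogonal to $p^*$, these are three mutually orthogonal symmetry planes.
\end{itemize}
In either case $u$ is even. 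For $\alpha<1$ (the case $\alpha=1$ being classical), the even-symmetry result of \cite{Tian} (Theorem~\ref{Theorem1}) then gives axial symmetry about some axis, and $u\equiv0$ when $\alpha=\frac13$.
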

{\bf Proof.}  The case $\alpha=1$ is classical, so we may assume that $\alpha<1$.  Without loss of generality, we may assume that the two orthogonal symmetry planes are the $x_1x_3$- and $x_2x_3$-planes. Hence
\begin{align}\label{SymmetricTwoPlanes}
u(x_1,x_2,x_3)
=u(-x_1,x_2,x_3)
=u(x_1,-x_2,x_3),
\qquad (x_1,x_2,x_3)\in\mathbb{S}^2.
\end{align}
There exists \( p^* = (x_1^*, x_2^*, 0) \in \mathbb{S}^2 \) such that 
\[
\nabla u(p^*) \cdot k = 0,
\]
where \( k = (0, 0, 1) \).

If \( x_1^* = 0 \) or \( x_2^* = 0 \), then it follows from \eqref{SymmetricTwoPlanes} that \( \pm p^* \) are critical points of \( u \). By Lemma \ref{secondPlaneLemma2}, $u$ is symmetric with respect to two orthogonal planes containing the line through $\pm p^*$. The one among the two planes in \eqref{SymmetricTwoPlanes} that is orthogonal to $p^*$ is orthogonal to both of these planes. Hence $u$ is symmetric with respect to three mutually orthogonal planes through the origin.
Suppose \( x_1^* \neq 0 \) and \( x_2^* \neq 0 \). Then it follows from \eqref{SymmetricTwoPlanes} that 
\[
\nabla u(\pm x_1^*, \pm x_2^*, 0) \cdot k = 0.
\]
By Lemma \ref{threePointLemma}, $u$ is therefore symmetric with respect to the $x_1x_2$-plane, in addition to the two symmetries in \eqref{SymmetricTwoPlanes}. Thus, in either case $u$ is symmetric with respect to three mutually orthogonal planes. Since $\alpha<1$, Lemma 8 in \cite{Tian} applies and implies that \( u \) is axially symmetric. Finally, if \( \alpha = \frac{1}{3} \), it follows from Proposition 1 in \cite{Tian} that \( u \equiv 0 \).

\hfill \( \Box \)

\section*{Acknowledgments}

Changfeng Gui is supported by NSFC Key Program (Grant No. 12531010), University of Macau research grants CPG2024-00016-FST, CPG2025-00032-FST, SRG2023-00011-FST, MYRGGRG2023-00139-FST-UMDF,  and Macao SAR FDCT 0003/2023/RIA1 and Macao SAR FDCT 0024/2023/RIB1. Amir Moradifam is supported by NSF grant DMS-1953620.

\bibliographystyle{plain}
\bibliography{SphereCovering-2}

\end{document}